\newtheorem{thm}{Theorem}[section]
\newtheorem{cor}[thm]{Corollary}
\newtheorem{lem}[thm]{Lemma}
\newtheorem{prop}[thm]{Proposition}
\newtheorem*{lem*}{Lemma}
\theoremstyle{definition}
\theoremstyle{remark}
\newtheorem{rmk}[thm]{Remark}
\newtheorem{example}[thm]{Example}
\numberwithin{equation}{section}
\numberwithin{figure}{section}
\newcommand{\NN}{\mathbb{N}}
\newcommand{\RR}{\mathbb{R}}
\newcommand{\TC}{\mathcal{T}C}
\newcommand{\Aut}{\operatorname{Aut}}
\date{\today}
\begin{document}

\author[Astrid an Huef]{Astrid an Huef}
\author[Iain Raeburn]{Iain Raeburn}
\email{astrid.anhuef@vuw.ac.nz, iain.raeburn@vuw.ac.nz}
\address{School of Mathematics and Statistics, Victoria University of Wellington, P.O. Box 600, Wellington 6140, New Zealand.}

\title[KMS states on higher-rank graph algebras]{Equilibrium states on the Toeplitz algebras\\ of small higher-rank graphs}

\begin{abstract}{We consider a family of operator-algebraic dynamical systems involving the Toeplitz algebras of higher-rank graphs. We explicitly compute the KMS states (equilibrium states) of these systems built from small graphs with up to four connected components.}
\end{abstract}

\maketitle

Over the past decade there has  been a great deal of interest in KMS states (or equilibrium states) on $C^*$-algebras of directed graphs \cite{KW, aHLRS1, Chl, CL, M, T} and their higher-rank analogues \cite{aHLRS2, aHLRS3, LLNSW, Yang3, FGKP, FGJKP, C}. At first this work focused on strongly connected graphs, possibly because these  had provided many examples of interesting simple $C^*$-algebras \cite{Sz, PRRS}. A uniform feature of all this work is that the Toeplitz algebras of graphs have much more interesting KMS structure than their Cuntz--Krieger quotients. 

More recently, we have been looking at graphs with more than one strongly connected component \cite{aHLRS4, aHKR2}. This throws up new problems: removing components can create sources, and, as Kajiwara and Watatani demonstrated in \cite[Theorem~4.4]{KW}, sources give rise to extra KMS states (see also \cite[Corollary~6.1]{aHLRS1} for a version phrased with our graph-algebra conventions). For higher-rank graphs, the situation is even more complicated. We saw in \cite[\S8]{aHKR2} that even if the original graph has no sources, removing a component can give sources of several different kinds. So when we tried to test the general results from \cite{aHKR} and \cite[\S3--6]{FaHR} by organising them into a coherent program for calculating KMS states, we ruled out the possiblity that there could be non-trivial bridges between different components \cite[\S7--8]{FaHR}. Christensen \cite{C2} has recently shown that there is in principle no need to do this: our problems arose because we want to find explicit formulas for the KMS states.

So here we study graphs like the ones in \cite[\S8]{aHKR2} which led us to rule out non-trivial bridges. We find that, while there are indeed new difficulties at almost every turn, the  general results in \cite[\S3--6]{FaHR} do in fact suffice to determine all the KMS states.

The first problem we run into concerns the  different kinds of sources: some are \emph{absolute sources}, which receive no edges, and others may receive edges of some colours but not others. The only Cuntz--Krieger relations available for higher-rank graphs with sources are those of \cite{RSY2}; since these are complicated, we have to develop some techniques for finding efficient sets of relations, and these techniques may be useful elsewhere. We discuss this in \S\ref{exhsets}, and introduce the two main families of $2$-graphs that we will analyse. The first family contains graphs with two vertices $\{u,v\}$;  the second, graphs with three vertices $\{u,v,w\}$, each containing one of the first family as a subgraph. We normalise the dynamics to ensure  that the critical inverse temperature is always~$1$. 

We observed in \cite[\S8]{aHKR2} that the results of \cite{aHLRS2} for non-critical $\beta$ do apply in the presence of sources even though there was officially a blanket assumption of ``no sources'' in \cite{aHLRS2}. In all our classification results, the KMS states at non-critical inverse temperatures are parametrised by a simplex of ``subinvariant vectors''.  Identifying this simplex requires computing the numbers of paths with range a given vertex, and the presence of sources throws up new problems, which we deal with in \S\ref{secex2.2} and \S\ref{sectrickyex}. The results in \S\ref{sectrickyex} for the graphs with three vertices depend on the analogous results for the graphs with two vertices in the previous section.

In \S\ref{seccritinvt} we apply the results of \cite[\S4 and \S5]{FaHR} to find the KMS states at the critical inverse temperature for the  higher-rank graphs we studied in the preceding sections. This involves proving, first for the example with two vertices $\{u,v\}$, and then for the three-vertex example, that a KMS$_1$ state cannot see any of the vertices except $u$. The proofs rely on our understanding of the Cuntz--Krieger relations for graphs with sources.

The examples we have studied arose in \cite[\S8]{aHKR2} as subgraphs of a graph with $4$ vertices which does not itself have sources. We finish by calculating the KMS$_1$ states of this example. We get one by lifting the unique KMS$_1$ state of $\TC^*(u\Lambda u)$, and we find a second by stepping carefully through the construction of  \cite[\S4]{FaHR}. In the final section, we investigate what happens below the critical inverse temperature.

In conclusion: while the existence of sources in a graph or in subgraphs certainly complicates the situation, it doesn't necessarily make the situation intractable.

\section{Exhaustive sets}\label{exhsets}

Throughout, we consider a finite $k$-graph $\Lambda$, and typically $k=2$. A vertex $v\in \Lambda^0$ is a \emph{source} if there exists $i\in \{1,\dots,k\}$ such that $v\Lambda^{e_i}$ is empty. (We believe this is standard: it is the negation of ``$\Lambda$ has no sources'' in the sense of the original paper \cite{KP}.) Our main examples are $2$-graphs in which a vertex can receive red edges but not blue edges or vice-versa --- for example, the vertex $v$ in the graph in \cite[Figure~2]{aHKR2} (which is also Example~\ref{trickyex} below). We call a vertex such that $v\Lambda^{e_i}$ is empty for all $i$ an \emph{absolute source}. For example, the vertex $w$ in Example~\ref{trickyex} is an absolute source.

For such graphs the only Cuntz--Krieger relations available are those of \cite{RSY2}. As there, if $\mu, \nu$ is a pair of of paths in $\Lambda$ with $r(\mu)=r(\nu)$, we set
\[\Lambda^{\min}(\mu,\nu)=\big\{(\alpha,\beta)\in \Lambda\times \Lambda: \mu\alpha=\nu\beta\text{ and } d(\mu\alpha)=d(\mu)\vee d(\nu)\big\}
\]
for the set of \emph{minimal common extensions}. For a vertex $u\in \Lambda^0$, a finite subset $E$ of $u\Lambda^1:=\bigcup_{i=1}^ku\Lambda^{e_i}$ is \emph{exhaustive} if for every $\mu\in u\Lambda$ there exists $e\in E$ such that $\Lambda^{\min}(\mu,e)$ is nonempty. 

We then use the Cuntz--Krieger relations of \cite{RSY2}, and in particular the presentation of these relations which uses only edges,   as discussed in \cite[Appendix~C]{RSY2}. As there, we write $\{t_\mu:\mu\in \Lambda\}$ for the universal Toeplitz--Cuntz--Krieger family which generates the Toeplitz algebra $\TC^*(\Lambda)$. The Cuntz--Krieger relations then include the relations (T1), (T2), (T3) and (T5) for Toeplitz--Cuntz--Krieger families, as in \cite{aHLRS2} and \cite{aHKR2}, and for every $u$ that is not a source the extra relations 
\begin{itemize}
\item[(CK)] $\prod_{e\in E}(t_u-t_et_e^*)=0$\quad\text{for all $u\in \Lambda^0$ and finite exhaustive sets $E\subset u\Lambda^1$}.
\end{itemize}
Since adding extra edges to an exhaustive set gives another exhaustive set, it is convenient to find the smallest possible exhaustive sets. Then the Cuntz--Krieger relations for these smallest sets are the sharpest.

\begin{lem}\label{idFE}
Suppose that $\Lambda$ is a finite $k$-graph and $E\subset u\Lambda^1$ is a finite exhaustive set. Consider $i\in\{1,\dots,k\}$ and $e\in u\Lambda^{e_i}$. If there is a path $e\mu\in u\Lambda^{\NN e_i}$ such that $s(\mu)$ is an absolute source, then $e\in E$.
\end{lem}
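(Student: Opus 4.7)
The plan is to argue by contrapositive: assume $e \notin E$, and produce a single path $\nu \in u\Lambda$ witnessing that $E$ fails to be exhaustive, namely $\nu = e\mu$. Since $s(\mu)$ is an absolute source, the path $e\mu$ sits at a ``dead end'' in every direction except the $i$-th, and this rigidity is exactly what prevents minimal common extensions with anything but $e$ itself.

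Write $d(e\mu) = ne_i$ with $n \geq 1$. Fix $e' \in E$, say $e' \in u\Lambda^{e_j}$, and aim to show $\Lambda^{\min}(e\mu, e') = \emptyset$. I would split into two cases according to whether $j = i$ or $j \neq i$.

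In the same-colour case $j = i$, the degree of a common extension must equal $ne_i \vee e_i = ne_i$, so any pair $(\alpha,\beta) \in \Lambda^{\min}(e\mu,e')$ satisfies $d(\alpha) = 0$ (hence $\alpha = s(e\mu)$) and the equality $e\mu = e'\beta$ with $d(\beta) = (n-1)e_i$. The unique factorisation property of $\Lambda$ applied to the factorisations of the degree $ne_i = e_i + (n-1)e_i$ then forces the initial edge $e$ to coincide with $e'$, contradicting $e \notin E$. In the different-colour case $j \neq i$, we have $d(e\mu) \vee d(e') = ne_i + e_j$, so any pair $(\alpha,\beta) \in \Lambda^{\min}(e\mu,e')$ must satisfy $d(\alpha) = e_j$ and $r(\alpha) = s(e\mu) = s(\mu)$. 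Thus $\alpha \in s(\mu)\Lambda^{e_j}$; but $s(\mu)$ is an absolute source, so $s(\mu)\Lambda^{e_j} = \emptyset$, and no such $\alpha$ exists.

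Combining the two cases, every $e' \in E$ gives $\Lambda^{\min}(e\mu,e') = \emptyset$, contradicting the assumption that $E$ is exhaustive (applied to the path $e\mu \in u\Lambda$). Hence $e \in E$, as required. The only delicate point is keeping the source/range conventions straight and invoking unique factorisation correctly in the same-colour case; otherwise the argument is essentially a direct unpacking of the definitions of $\Lambda^{\min}$, exhaustiveness, and absolute source.
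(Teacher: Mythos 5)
Your argument is correct and is essentially the paper's proof run in the contrapositive: the paper takes the $f\in E$ supplied by exhaustiveness and shows $f=e$ using the same two ingredients you use (the absolute source forcing the extension $\alpha$ to be a vertex, which in particular rules out the different-colour case via the degree computation, and unique factorisation identifying the initial edge). Your explicit split into the cases $j=i$ and $j\neq i$ just unpacks the paper's single degree argument $0\leq d(f)\leq d(f\beta)\in\NN e_i$, so there is no substantive difference.
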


\begin{proof}
Since $E$ is exhaustive, there exists $f\in E$ such that $\Lambda^{\min}(e\mu,f)\not=\emptyset$. Then there exist paths $\alpha,\beta$ such that $e\mu\alpha=f\beta$. Since $s(\mu)\Lambda=\{s(\mu)\}$, we have $\alpha=s(\mu)$. Thus $d(f\beta)=d(e\mu\alpha)=d(e\mu)\in \NN{e_i}$;  since $f\in\Lambda^1$ and
\[
0\leq d(f)\leq d(f)+d(\beta)=d(f\beta) \in \NN{e_i},
\]
we deduce that $d(f)=e_i$. Now uniqueness of factorisations and $e\mu=f\beta$ imply that $f=(e\mu)(0,e_1)=e$. Thus $e\in E$.
\end{proof}

\begin{example}\label{ex2graph1source}
We consider a $2$-graph $\Lambda$ with the following skeleton: 
\[
\begin{tikzpicture}[scale=1.5]
 \node[inner sep=0.5pt, circle] (u) at (0,0) {$u$};
    \node[inner sep=0.5pt, circle] (v) at (2,0) {$v,$};
\draw[-latex, blue] (u) edge [out=195, in=270, loop, min distance=30, looseness=2.5] (u);
\draw[-latex, red, dashed] (u) edge [out=165, in=90, loop, min distance=30, looseness=2.5] (u);
\draw[-latex, blue] (v) edge [out=200, in=340] (u);
\draw[-latex, red, dashed] (v) edge [out=160, in=20] (u) ;
\node at (-.7, 0.3) {\color{black} $d_2$};
\node at (-.7,-0.3) {\color{black} $d_1$};
\node at (1, 0.4) {\color{black} $a_2$};
\node at (1,-0.4) {\color{black} $a_1$};
\end{tikzpicture}
\]
innwhcih the label $a_1$, for example, means that there are $a_1$ blue edges from $v$ to $u$. Since each path in $u\Lambda^{e_1+e_2}v$ has unique blue-red and red-blue factorisations, the numbers $a_i, d_i\in \NN\backslash\{0\}$ satisfy $d_1a_2=d_2a_1$. Since $v$ is an absolute source, Lemma~\ref{idFE} implies that every finite exhaustive set in $u\Lambda^1$ must contain every edge, and hence contains $u\Lambda^1$; since every finite exhaustive set is by definition a subset of $u\Lambda^1$, it is therefore the only finite exhaustive set. Thus the only Cuntz--Krieger relation at $u$ is
\[
\prod_{e\in u\Lambda^1}(t_u-t_et_e^*)=0.
\]
There is no Cuntz--Krieger relation at $v$.
 \end{example}
 
We now aim to make the Cuntz--Krieger relation (CK) look a little more like the familiar ones involving sums of range projections. 

\begin{prop}\label{CKforE}
Suppose that $\Lambda$ is a finite  $k$-graph, $u$ is a vertex and $E\subset u\Lambda^1$ is a finite exhaustive set. For each nonempty subset $J$ of $\{1,\dots,k\}$, define $e_J\in \NN^k$ by $e\!_J=\sum_{i\in J}e_i$. Then the Cuntz--Krieger relation \textnormal{(CK)} associated to $E$ is equivalent to
\[
t_u+\sum_{\emptyset\not=J\subset \{1,\dots,k\}}(-1)^{|J|}\sum_{\{\mu\in u\Lambda^{e\!_J}:\mu(0,e_i)\in E\text{ for $i\in J$}\}} t_\mu t_\mu^* =0.
\]
\end{prop}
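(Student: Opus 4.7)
The plan is to prove that the two relations are literally equal, not merely equivalent, by expanding the product $\prod_{e\in E}(t_u-t_et_e^*)$ and identifying the right-hand side term by term. Expanding distributively gives
\[
\prod_{e\in E}(t_u-t_et_e^*)=\sum_{F\subseteq E}(-1)^{|F|}\prod_{e\in F}t_et_e^*,
\]
where the empty product is interpreted as $t_u$ and each nonempty product is taken in some fixed order. The first task is to argue that only very small $F$ contribute: if $F$ contains two distinct edges $e,e'$ of the same colour $i$, then $t_et_e^*$ and $t_{e'}t_{e'}^*$ are orthogonal range projections (by (T3) applied to $s(e)\neq s(e')$ or to distinct edges with common range), so the whole product vanishes. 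Hence the only surviving $F$ are those containing at most one edge of each colour; such an $F$ is encoded by the set of colours $J=\{i:F\cap u\Lambda^{e_i}\neq\emptyset\}$ and by a choice of one edge $f_i\in F\cap u\Lambda^{e_i}$ for each $i\in J$, and then $|F|=|J|$.

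The second task is the key computation: for a choice $(f_i)_{i\in J}$ of one edge of each colour in $J$, all drawn from $E$,
\[
\prod_{i\in J}t_{f_i}t_{f_i}^*=\sum_{\{\mu\in u\Lambda^{e_J}:\mu(0,e_i)=f_i\text{ for }i\in J\}}t_\mu t_\mu^*.
\]
I will prove this by induction on $|J|$, using the standard Toeplitz identity $t_\mu^*t_\nu=\sum_{(\alpha,\beta)\in\Lambda^{\min}(\mu,\nu)}t_\alpha t_\beta^*$. For $|J|=2$, say $J=\{i,j\}$ with $i\neq j$, the minimal common extensions of $f_i$ and $f_j$ are parameterised by paths $\mu\in u\Lambda^{e_i+e_j}$ with $\mu(0,e_i)=f_i$ and $\mu(0,e_j)=f_j$, and the unique factorisation property of the $k$-graph gives $t_{f_i}t_{\mu(e_i,e_i+e_j)}=t_\mu$ and $t_{\mu(e_j,e_i+e_j)}^*t_{f_j}^*=t_\mu^*$; this yields the formula in degree two and also shows that the two projections commute. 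Induction with the same argument extends this to arbitrary $J$ and simultaneously justifies that the ordering of the factors in $\prod_{i\in J}t_{f_i}t_{f_i}^*$ is irrelevant.

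The third task is a routine reindexing. Substituting the computed formula back in and swapping the order of summation (first over $\mu$, then collecting the single tuple $(f_i)_{i\in J}=(\mu(0,e_i))_{i\in J}$ that produces it), the $|J|=0$ term gives $t_u$ and the nonempty-$J$ contributions assemble exactly into
\[
\sum_{\emptyset\neq J\subseteq\{1,\dots,k\}}(-1)^{|J|}\sum_{\{\mu\in u\Lambda^{e_J}:\mu(0,e_i)\in E\text{ for }i\in J\}}t_\mu t_\mu^*.
\]
So the left and right sides are equal as elements of $\TC^*(\Lambda)$, whence vanishing of one is equivalent to vanishing of the other.

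The step I expect to be the main obstacle is the inductive computation of $\prod_{i\in J}t_{f_i}t_{f_i}^*$: one needs to keep the bookkeeping of the factorisation $\mu=\mu(0,e_i)\mu(e_i,e_J)$ straight while showing that the projections pairwise commute so that the order in the product is immaterial; the vanishing-for-repeated-colours observation and the final reindexing are bookkeeping by comparison.
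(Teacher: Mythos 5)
Your proof is correct and follows essentially the same route as the paper: the paper cites the colour-grouped expansion \eqref{expCK} from \cite{aHKR2} (which you re-derive via the subset expansion and the orthogonality of same-colour range projections) and then proves Lemma~\ref{expprod} by exactly the induction you describe, using (T5) and $\Lambda^{\min}$ to convert products of edge range projections into sums of range projections $t_\mu t_\mu^*$ over paths $\mu\in u\Lambda^{e_J}$ with prescribed initial edges. The only cosmetic difference is that you run the induction over individual edge choices $(f_i)_{i\in J}$ and reindex at the end, whereas the paper inducts directly on the sums $\sum_{f\in E\cap u\Lambda^{e_i}}t_ft_f^*$.
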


From the middle of \cite[page~120]{aHKR2} we have
\begin{equation}\label{expCK}
\prod_{e\in E}(t_u-t_et_e^*)=t_u+\sum_{\emptyset\not=J\subset \{1,\cdots,k\}} (-1)^{|J|}\prod_{i\in J}\Big(\sum_{e\in E\cap u\Lambda^{e_i}}t_et_e^*\Big).
\end{equation}
We want to expand the product, and we describe the result in a lemma. Proposition~\ref{CKforE} then follows from \eqref{expCK} and the lemma.
 
 \begin{lem}\label{expprod}
Suppose that $E$ is a finite exhaustive subset of $u\Lambda^1$ and $J$ is a subset of $\{1,\cdots,k\}$. Write $e_J=\sum_{i\in J}e_i$. Then
\begin{equation}\label{CKEassum}
\prod_{i\in J} \Big(\sum_{f\in E\cap u\Lambda^{e_i}}t_ft_f^*\Big)=\sum_{\{\mu\in u\Lambda^{e\!_J}:\mu(0,e_i)\in E\text{ for $i\in J$}\}}t_\mu t_\mu^*.
\end{equation}
\end{lem}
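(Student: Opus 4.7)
My plan is to expand the product on the left of \eqref{CKEassum} into a sum over tuples $(f_i)_{i\in J}\in\prod_{i\in J}(E\cap u\Lambda^{e_i})$ of products $\prod_{i\in J}t_{f_i}t_{f_i}^*$. The heart of the proof is then the claim that each such summand equals $t_\mu t_\mu^*$ for a (necessarily unique) $\mu\in u\Lambda^{e_J}$ with $\mu(0,e_i)=f_i$ for every $i\in J$ when such a $\mu$ exists, and vanishes otherwise. Granted this claim, the factorisation property establishes a bijection between the tuples with non-zero contribution and the indexing set on the right-hand side of \eqref{CKEassum}, so the two sums agree.

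I would prove the claim by induction on $|J|$. The base case $|J|=1$ is immediate. For the inductive step I pick $j\in J$, set $J'=J\setminus\{j\}$, and factor the product as $\bigl(\prod_{i\in J'}t_{f_i}t_{f_i}^*\bigr)t_{f_j}t_{f_j}^*$. If the inductive hypothesis makes the first factor zero, we are done; otherwise it equals $t_\nu t_\nu^*$ for the unique $\nu\in u\Lambda^{e_{J'}}$ with $\nu(0,e_i)=f_i$ for $i\in J'$, and I apply the Toeplitz--Cuntz--Krieger relation
\[
t_\nu^* t_{f_j}=\sum_{(\alpha,\beta)\in\Lambda^{\min}(\nu,f_j)}t_\alpha t_\beta^*
\]
to obtain $t_\nu t_\nu^* t_{f_j}t_{f_j}^*=\sum_{(\alpha,\beta)\in\Lambda^{\min}(\nu,f_j)}t_{\nu\alpha}t_{\nu\alpha}^*$, since $\nu\alpha=f_j\beta$ for each such pair.

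The step that needs care is showing $|\Lambda^{\min}(\nu,f_j)|\le 1$. Because $d(\nu)=e_{J'}$, $d(f_j)=e_j$ and $j\notin J'$, any $(\alpha,\beta)\in\Lambda^{\min}(\nu,f_j)$ has $d(\alpha)=e_j$ and $d(\beta)=e_{J'}$; the path $\mu:=\nu\alpha=f_j\beta\in u\Lambda^{e_J}$ then satisfies $\mu(0,e_i)=f_i$ for $i\in J'$ (since its initial $e_{J'}$-segment is $\nu$) and $\mu(0,e_j)=f_j$ (from the factorisation $\mu=f_j\beta$), and the factorisation property determines $\mu$, hence $(\alpha,\beta)$, uniquely from these initial segments. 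I expect this uniqueness bookkeeping to be the main obstacle; once it is in place, assembling the pieces and matching the two sums is straightforward, and the exhaustiveness of $E$ in fact plays no role.
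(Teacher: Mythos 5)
Your overall strategy---expand the product over tuples $(f_i)_{i\in J}$ and identify each summand using (T5)---is the same as the paper's inductive argument, but your central claim is false: a single product $\prod_{i\in J}t_{f_i}t_{f_i}^*$ need not equal a single $t_\mu t_\mu^*$ or zero, because $\Lambda^{\min}(\nu,f_j)$ can have more than one element even when $d(\nu)\wedge d(f_j)=0$. The precise point of failure is your assertion that ``the factorisation property determines $\mu$ uniquely from these initial segments'': the factorisation property determines a path of degree $e\!_J$ from an initial segment \emph{together with the complementary final segment}, not from the family of initial segments $\mu(0,e_i)$ for $i\in J$. Concretely, take the $2$-graph with one vertex, blue edges $b_1,b_2$, red edges $r_1,r_2$, and factorisation rules $b_1r_1=r_1b_1$, $b_1r_2=r_1b_2$, $b_2r_1=r_2b_1$, $b_2r_2=r_2b_2$ (for $k=2$ any bijection between blue--red and red--blue words defines a $2$-graph). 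Then the distinct paths $b_1r_1$ and $b_1r_2$ both have $\mu(0,e_1)=b_1$ and $\mu(0,e_2)=r_1$, so $\Lambda^{\min}(b_1,r_1)=\{(r_1,b_1),(r_2,b_2)\}$ has two elements and $t_{b_1}t_{b_1}^*t_{r_1}t_{r_1}^*$ is a sum of two range projections, not one.

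The lemma itself survives, and so does most of your argument, because the correct form of your claim is
\[
\prod_{i\in J}t_{f_i}t_{f_i}^*=\sum_{\{\mu\in u\Lambda^{e\!_J}:\,\mu(0,e_i)=f_i\text{ for all }i\in J\}}t_\mu t_\mu^*,
\]
a sum which may have zero, one or several terms. Summing this over all tuples $(f_i)\in\prod_{i\in J}(E\cap u\Lambda^{e_i})$ still yields the right-hand side of \eqref{CKEassum} with each $\mu$ appearing exactly once, since each $\mu$ determines its tuple $(\mu(0,e_i))_{i\in J}$; only the injectivity of the map from tuples to paths fails, and your argument does not actually need it. This is essentially what the paper does: in the inductive step it keeps the full sum over $\Lambda^{\min}(\mu,e)$ and only identifies the \emph{set} of paths $\mu g$ that arise, never claiming that each pair of summands contributes at most one term. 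Your observation that exhaustiveness of $E$ plays no role is correct.
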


\begin{proof}
For $|J|=1$, says $J=\{i\}$, we have $e_J=e_i$. Thus
\[
\{\mu\in u\Lambda^{e\!_J}:\mu(0,e_i)\in E\text{ for $i\in J$}\}=\{\mu\in u\Lambda^{e_i}:\mu=\mu(0,e_i)\in E\}= E\cap uE^{e_1}.
\]
Now suppose the formula holds for $|J|=n$ and that $K:=J\cup\{j\}$ for some $j\in\{1,\dots,k\}\setminus J$. Then the inductive hypothesis gives
\begin{align*}
\prod_{i\in K} \Big(\sum_{f\in E\cap u\Lambda^{e_i}}t_ft_f^*\Big)
&=\bigg(\prod_{i\in J} \Big(\sum_{f\in E\cap u\Lambda^{e_i}}t_ft_f^*\Big)\bigg)\Big(\sum_{e\in E\cap u\Lambda^{e_j}}t_et_e^*\Big)\\
&=\Big(\sum_{\{\mu\in u\Lambda^{e\!_J}:\mu(0,e_i)\in E\text{ for $i\in J$}\}}t_\mu t_\mu^*\Big)\Big(\sum_{e\in E\cap u\Lambda^{e_j}}t_et_e^*\Big).
\end{align*}
Now for each pair of summands $t_\mu t_\mu^*$ and $t_et_e^*$ the relation (T5) gives
\[
(t_\mu t_\mu^*)(t_et_e^*)=\sum_{(g,\nu)\in\Lambda^{\min}(\mu,e)}t_\mu(t_g t_\nu^*)t_e^*
=\sum_{(g,\nu)\in\Lambda^{\min}(\mu,e)}t_{\mu g} t_{\nu e}^*.
\]
By definition of $\Lambda^{\min}$ we have $g\in \Lambda^{e_j}$ and $\mu g=e\nu$, so $d(\mu g)=e_K$. Then we have
\[
(\mu g)(0,e_j)=(e\nu)(0,e_j)=e\in E\quad\text{and}\quad (\mu g)(0,e_i)=\mu(0,e_i)\in E \text{ for $i\in J$.}
\]
So the paths which arise as $\mu g$ are precisely those in the set 
\[
\{\lambda\in u\Lambda^{e_K}:\lambda(0,e_i)\in E\text{ for $i\in J\cup \{j\}=K$}\}.\qedhere
\]
\end{proof}

\begin{rmk}
It is possible that the index set on the right-hand side of \eqref{CKEassum} is empty, in which case we are asserting that the product on the left is $0$.
\end{rmk}

In a finite $k$-graph, the set $u\Lambda^1$ of all edges with range $u$ is always a finite exhaustive subset of $\Lambda$. Then Proposition~\ref{CKforE} applies with $E=u\Lambda^1$. For this choice of $E$, the condition $\mu(0,e_i)\in E$ is trivially satisfied, and hence we get the following simpler-looking relation.

\begin{cor}\label{FEex2.2}
Suppose that $\Lambda$ is a finite $k$-graph. Then for every $u\in \Lambda^0$ we have
\[
\prod_{f\in u\Lambda^1}(t_u-t_ft_f^*)= t_u+\sum_{J\subset\{1,\dots,k\}} (-1)^{|J|}\Big(\sum_{\mu\in \Lambda^{e\!_J}}t_\mu t_\mu^*\Big).
\]
\end{cor}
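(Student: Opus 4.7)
The plan is to apply Proposition~\ref{CKforE} directly to the exhaustive set $E=u\Lambda^1$, so the only work is (i) verifying that $u\Lambda^1$ is indeed a finite exhaustive subset of $u\Lambda^1$, and (ii) observing that the inner condition ``$\mu(0,e_i)\in E$ for $i\in J$'' appearing in that proposition is vacuous in this special case.

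For (i), assume $u$ is not a source, so $u\Lambda^{e_i}\neq\emptyset$ for all $i$. I would verify exhaustiveness of $u\Lambda^1$ by cases on $\mu\in u\Lambda$. If $d(\mu)\neq 0$, pick any $i\in\{1,\dots,k\}$ with $d(\mu)_i\geq 1$, set $e:=\mu(0,e_i)\in u\Lambda^{e_i}\subset u\Lambda^1$, and note that $(s(\mu),\mu(e_i,d(\mu)))\in\Lambda^{\min}(\mu,e)$ by unique factorisation. If $\mu=u$, use the non-source assumption to pick any $e\in u\Lambda^{e_1}\subset u\Lambda^1$; then $(e,s(e))\in\Lambda^{\min}(u,e)$. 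Thus $u\Lambda^1$ is exhaustive, and it is finite since $\Lambda$ is.

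For (ii), given any $\mu\in u\Lambda^{e_J}$ and any $i\in J$, the factor $\mu(0,e_i)$ is an edge of degree $e_i$ with range $r(\mu)=u$, so $\mu(0,e_i)\in u\Lambda^{e_i}\subset u\Lambda^1=E$ automatically. Therefore the indexing set
\[
\{\mu\in u\Lambda^{e\!_J}:\mu(0,e_i)\in E\text{ for all }i\in J\}
\]
coincides with $u\Lambda^{e_J}$. Substituting $E=u\Lambda^1$ into the conclusion of Proposition~\ref{CKforE} then yields the displayed identity (with the sum interpreted as being over $\mu\in u\Lambda^{e_J}$, matching the expansion of the left-hand side in~\eqref{expCK}).

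There is no real obstacle: the corollary is essentially a book-keeping specialisation of Proposition~\ref{CKforE}. The only point requiring care is the verification that $u\Lambda^1$ is exhaustive when $u$ is not a source, and even this reduces to unique factorisation together with the definition of ``source''.
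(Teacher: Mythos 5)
Your proof follows exactly the paper's route: the corollary is obtained by specialising Proposition~\ref{CKforE} (equivalently, the expansion \eqref{expCK} together with Lemma~\ref{expprod}) to $E=u\Lambda^1$ and observing that the condition $\mu(0,e_i)\in E$ is then vacuous, so the index set becomes all of $u\Lambda^{e_J}$. Your explicit check that $u\Lambda^1$ is exhaustive is more detail than the paper gives (it simply asserts this), and your restriction to the case where $u$ is not a source is harmless: exhaustiveness of $u\Lambda^1$ in fact only fails when $u$ is an absolute source, in which case both sides of the identity degenerate to $t_u$.
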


\begin{example}\label{ex2graph1sourceFE}
We return to a $2$-graph $\Lambda$ with the skeleton desccribed in Example~\ref{ex2graph1source}, and its only finite exhaustive set  $u\Lambda^1$. Then \eqref{expCK} and Lemma~\ref{expprod} imply that 
\[
\prod_{e\in u\Lambda^1}(t_u-t_et_e^*)=t_u+\sum_{\emptyset\not=J\subset \{1,2\}}(-1)^{|J|}\Big(\sum_{\{\mu\in \Lambda^{e\!_J}:\mu(0,e_i)\in u\Lambda^1\}}t_\mu t_\mu^*\Big).
\]
The nonempty subsets of $\{1,2\}$ are $\{1\}$, $\{2\}$ and $\{1,2\}$. For $J=\{1\}$ the requirement $\mu(0,e_1)\in u\Lambda^1$ just says that $\mu$ is a blue edge ($d(\mu)=e_1$), and 
\[
\sum_{\mu\in \Lambda^{e\!_J}}t_\mu t_\mu^*=\sum_{e\in u\Lambda^{e_1}}t_et_e^*.
\]
A similar thing happens for $J=\{2\}$. For $J=\{1,2\}$, the condition on $\mu(0, e_i)$ is still trivially satisfied by all $\mu\in u\Lambda^{e_1+e_2}$. Hence the Cuntz--Krieger relation becomes
\[
t_u-\sum_{e\in u\Lambda^1}t_et_e^*-\sum_{e\in u\Lambda^2}t_et_e^*+\sum_{\mu\in u\Lambda^{e_1+e_2}}t_\mu t_\mu^*=0,
\]
or equivalently
\[
t_u=\sum_{e\in u\Lambda^1}t_et_e^*+\sum_{e\in u\Lambda^2}t_et_e^*-\sum_{\mu\in u\Lambda^{e_1+e_2}}t_\mu t_\mu^*,
\]
which does indeed look more like a Cuntz--Krieger relation. 
\end{example}

Lemma~\ref{idFE} establishes a lower bound for the  finite exhaustive sets. In Example~\ref{ex2graph1source}, this lower bound was all of $u\Lambda^1$, and hence this had to be the only finite exhaustive set. However, this was a bit lucky, as the next example shows.

\begin{example}\label{trickyex}
We consider a $2$-graph $\Lambda$ with skeleton
\[
\begin{tikzpicture}[scale=1.5]
 \node[inner sep=0.5pt, circle] (u) at (0,0) {$u$};
    \node[inner sep=0.5pt, circle] (v) at (2,1) {$v$};
    \node[inner sep=0.5pt, circle] (w) at (2,-1) {$w$};
\draw[-latex, blue] (u) edge [out=195, in=270, loop, min distance=30, looseness=2.5] (u);
\draw[-latex, red, dashed] (u) edge [out=165, in=90, loop, min distance=30, looseness=2.5] (u);
\draw[-latex, blue] (v) edge [out=220, in=10] (u);
\draw[-latex, red, dashed] (v) edge [out=190, in=40] (u) ;
\draw[-latex, blue] (w) edge [out=155, in=335] (u) ;
\draw[-latex, red, dashed] (w) edge [out=90, in=270] (v);
\node at (-.7, 0.3) {\color{black} $d_2$};
\node at (-.7,-0.3) {\color{black} $d_1$};
\node at (.8, 0.8) {\color{black} $a_2$};
\node at (1.35,0.3) {\color{black} $a_1$};
\node at (2.2, 0) {\color{black} $b_2$};
\node at (.95,-0.7) {\color{black} $b_1$};
\end{tikzpicture}
\]
in which $d_1a_2=d_2a_1$ and $a_1b_2=d_2b_1$. Note that $w$ is an absolute source. Our interest in these graphs arises from \cite[\S8]{aHKR2}, where we saw that the Toeplitz algebras of such graphs can arise as quotients of the Toeplitz algebras of graphs with no sources.

The only finite exhaustive subset of $v\Lambda^1=v\Lambda^{e_2}$ is the whole set, and this yields a Cuntz--Krieger relation 
\begin{equation}\label{CKv}
\prod_{f\in v\Lambda^{e_2}}(t_v-t_ft_f^*)=0\Longleftrightarrow t_v=\sum_{f\in v\Lambda^{e_2}}t_ft_f^*.
\end{equation}
For the vertex $u$ the situation is more complicated.
\end{example}

\begin{prop}\label{idminFE} 
Suppose that $\Lambda$ is a $2$-graph with the skeleton described in Example~\ref{trickyex}. Then 
\[
E:= (u\Lambda^1u)\cup(u\Lambda^{e_2}v)\cup(u\Lambda^{e_1}w)
\]
is exhaustive, and every other finite exhaustive subset of $u\Lambda^1$ contains $E$. 
\end{prop}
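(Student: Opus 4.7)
My plan is to prove exhaustiveness and minimality separately.

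For exhaustiveness, I would do a case analysis on $d(\mu)$ for $\mu \in u\Lambda$. The crucial observation is that $u\Lambda^{e_2}$ is already entirely contained in $E$: since the only red edges leaving $w$ go to $v$ and not to $u$, one has $u\Lambda^{e_2}w = \emptyset$, so $u\Lambda^{e_2} = u\Lambda^{e_2}u \cup u\Lambda^{e_2}v \subset E$. Consequently, if $d(\mu)_2 \geq 1$, then $e := \mu(0,e_2)$ lies in $E$ and $(s(\mu),\mu(e_2,d(\mu))) \in \Lambda^{\min}(\mu,e)$. If $d(\mu) = 0$, then $\mu = u$ and any $e \in E$ works trivially. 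The remaining case is that $\mu$ is a pure blue path of positive length; if $\mu(0,e_1)$ is a loop at $u$ or lies in $u\Lambda^{e_1}w$, it is in $E$ and we are done.

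The delicate subcase is when $\mu(0,e_1) \in u\Lambda^{e_1}v$. Here $v\Lambda^{e_1} = \emptyset$ forces $\mu$ to be exactly this single blue $v \to u$ edge. To build a common extension, I would pick any $\alpha \in v\Lambda^{e_2}$ (a red $w \to v$ edge; nonempty because $b_2 \geq 1$) and apply the factorization property to $\mu\alpha$ to obtain the red-first factorization $\mu\alpha = e\beta$ with $d(e) = e_2$ and $d(\beta) = e_1$. Since $s(\beta) = w$ and every blue edge with source $w$ has range $u$, we get $r(\beta) = u$, hence $s(e) = u$, so $e$ is a red loop at $u$. Thus $e \in u\Lambda^{e_2}u \subset E$ and $(\alpha,\beta) \in \Lambda^{\min}(\mu,e)$.

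For minimality, let $F$ be any finite exhaustive subset of $u\Lambda^1$. For each edge $e \in E$, I would produce a monochrome extension $e\mu \in u\Lambda^{\NN e_i}$ with $s(\mu) = w$ (the unique absolute source) and appeal to Lemma~\ref{idFE} to conclude $e \in F$. Specifically: for $e \in u\Lambda^{e_1}w$, take $\mu = w$; for $e \in u\Lambda^{e_2}v$, extend by any red $w \to v$ edge in $v\Lambda^{e_2}$; for a blue loop $e \in u\Lambda^{e_1}u$, extend by any blue $w \to u$ edge in $u\Lambda^{e_1}w$; for a red loop $e \in u\Lambda^{e_2}u$, extend by a red $v \to u$ edge followed by a red $w \to v$ edge. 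Each of these extensions exists because all the numerical parameters $a_i,b_i,d_i$ are strictly positive.

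The main obstacle is the delicate subcase of exhaustiveness in which $\mu$ is a single blue $v \to u$ edge: here no prefix of $\mu$ lies in $E$, so the common extension must be produced indirectly via the factorization axiom, exchanging the blue $v \to u$ edge with a red $w \to v$ extension to force a red loop at $u$ to appear as the first red edge of $\mu\alpha$. This step genuinely exploits the existence of red $w \to v$ edges, which bridge $w$ through $v$ back to $u$.
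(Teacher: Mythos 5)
Your proof is correct and follows essentially the same route as the paper: minimality via Lemma~\ref{idFE} applied to monochrome extensions terminating at the absolute source $w$, and exhaustiveness by cases on $d(\mu)$, with the same key trick in the one non-trivial case (a single blue $v\to u$ edge), namely appending a red $w\to v$ edge and using the red-first factorisation to produce a red loop at $u$ in $E$. Your observation that $u\Lambda^{e_2}\subset E$ merely streamlines the paper's case analysis.
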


\begin{proof}
Since $w$ is an absolute source, Lemma~\ref{idFE} implies  that every finite exhaustive subset of $u\Lambda^1$ contains $u\Lambda^1u$, $u\Lambda^{e_2}v$ and $u\Lambda^1w=u\Lambda^{e_1}w$, and hence also the union $E$. So it suffices for us to prove that $E$ is exhaustive.

To see this, we take $\lambda\in u\Lambda$ and look for $e\in E$ such that $\Lambda^{\min}(\lambda,e)\not=\emptyset$. Unfortunately, this seems to require a case-by-case argument. We begin by eliminating some easy cases.
\begin{itemize}
\item If $\lambda=u$, we take $e\in u\Lambda^1u$; then $e\in \Lambda^{\min}(\lambda,e)$, and we are done. So we suppose that $d(\lambda)\not=0$.
\smallskip

\item If $\lambda\in u\Lambda u\setminus\{u\}$, we choose $i$ such that $e_i\leq d(\lambda)$. Then $\lambda(0,e_i)\in u\Lambda ^{e_i}u\subset E$ and $\Lambda^{\min}(\lambda,\lambda(0,e_i))=\{\lambda\}$ is nonempty.
\smallskip

\item We now suppose that $\lambda\in u\Lambda v\cup u\Lambda w$. If $e_2\leq d(\lambda)$, then $\lambda(0,e_2)\in E$ and $\Lambda^{\min}(\lambda, \lambda(0,e_2))\not=\emptyset$.  Otherwise $d(\lambda)\in\NN{e_1}$. 
\smallskip

\item If $d(\lambda)\geq 2e_1$, then $\lambda(0,e_1)\in u\Lambda^{e_1} u$ belongs to $E$, and $\Lambda^{\min}(\lambda, \lambda(0,e_1))\not=\emptyset$.
\smallskip

\item If $d(\lambda)=e_1$ and $s(\lambda)=w$, then $\lambda\in u\Lambda^{e_1} w$ belongs to $E$, and we take $e=\lambda$.
\end{itemize}
We are left to deal with paths $\lambda\in u\Lambda^{e_1}v$.  Choose $f\in v\Lambda^{e_2}w$, and consider $\lambda f$. Since $d(\lambda f)=d(\lambda)+d(f)=e_1+e_2$, $\lambda f$ has a red-blue factorisation 
\[
\lambda f=(\lambda f)(0,e_2)(\lambda f)(e_2,e_1+e_2). 
\]
But now $(\lambda f)(0,e_2)\in u\Lambda^{e_2}u\subset E$, and we have 
\[
(f,(\lambda f)(e_2,e_1+e_2)\big)\in\Lambda^{\min}\big(\lambda,(\lambda f)(0,e_2)\big). 
\]
Thus in all cases $\lambda$ has a common extension with some edge in $E$, and $E$ is exhaustive.
\end{proof}

So for the graphs $\Lambda$ with skeleton described in Example~\ref{trickyex}, there is a single Cuntz--Krieger relation at the vertex $u$, namely $\prod_{e\in E}(t_u-t_et_e^*)=0$. Now we rewrite this relation as a more familiar-looking sum. 

\begin{lem}\label{CKatu}
Suppose that $\Lambda$ is a $2$-graph with skeleton described in Example~\ref{trickyex}, and $E$ is the finite exhaustive set of Lemma~\ref{idminFE}. Then we have
\begin{equation}\label{expandprod_E}
\prod_{e\in E}(t_u-t_et_e^*)=t_u-\sum_{e\in u\Lambda^{e_1}\{u,w\}}t_et_e^*-\sum_{f\in u\Lambda^{e_2}\{u,v\}}t_ft_f^*+\sum_{\mu\in u\Lambda^{e_1+e_2}\{u,v\}}t_\mu t_\mu^.
\end{equation}
\end{lem}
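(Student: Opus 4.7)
The plan is to apply Proposition~\ref{CKforE} directly to the exhaustive set $E$ of Lemma~\ref{idminFE}. Since $k=2$, this reduces the proof to identifying the three index sets
\[
\{\mu\in u\Lambda^{e_J}:\mu(0,e_i)\in E\text{ for }i\in J\}
\]
corresponding to $J=\{1\}$, $J=\{2\}$ and $J=\{1,2\}$, and verifying that they coincide with $u\Lambda^{e_1}\{u,w\}$, $u\Lambda^{e_2}\{u,v\}$ and $u\Lambda^{e_1+e_2}\{u,v\}$ respectively; substituting these into the proposition, with signs $(-1)^{|J|}$, gives exactly the claimed formula.

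The singleton cases are quick. For $J=\{1\}$ the condition just asks that $\mu$ itself be a blue edge of $E$; the blue edges in $E$ are the loops in $u\Lambda^{e_1}u$ and the edges in $u\Lambda^{e_1}w$, whose union is $u\Lambda^{e_1}\{u,w\}$. For $J=\{2\}$ the red edges in $E$ are $u\Lambda^{e_2}u$ and $u\Lambda^{e_2}v$; since the skeleton has no red edge with range $u$ and source $w$, these exhaust $u\Lambda^{e_2}=u\Lambda^{e_2}\{u,v\}$.

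The substantive work — and the main obstacle — is the $J=\{1,2\}$ case, where I would split on the source of $\mu\in u\Lambda^{e_1+e_2}$ and inspect both the blue-first and red-first factorisations simultaneously. If $s(\mu)\in\{u,v\}$, then every edge with source $s(\mu)$ has range $u$, so uniqueness of factorisation forces both $\mu(0,e_1)\in u\Lambda^{e_1}u$ and $\mu(0,e_2)\in u\Lambda^{e_2}u$, and both factors lie in $u\Lambda^1 u\subset E$. If $s(\mu)=w$, however, the only red edges leaving $w$ land at $v$ rather than $u$, so the blue-first factorisation forces $\mu(0,e_1)\in u\Lambda^{e_1}v$, and these blue edges from $v$ to $u$ are not in $E$; such $\mu$ are therefore excluded.

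The subtle point to watch for is that a path in $u\Lambda^{e_1+e_2}w$ does admit a red-first factorisation with $\mu(0,e_2)\in u\Lambda^{e_2}u\subset E$, which might mislead one into keeping it; the colour asymmetry between $v$ and $w$ in the skeleton only manifests in the blue-first factorisation, and the hypothesis of Proposition~\ref{CKforE} demands that \emph{both} initial factors belong to $E$. Once this case analysis is in place, assembling the three index sets with signs $-,-,+$ produces the displayed equation \eqref{expandprod_E}.
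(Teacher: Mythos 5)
Your proposal is correct and follows essentially the same route as the paper: expand the product via Proposition~\ref{CKforE} (equivalently, via \eqref{expCK} and Lemma~\ref{expprod}) and then identify the three index sets, with the only substantive point being that a path $\mu\in u\Lambda^{e_1+e_2}w$ has $\mu(0,e_1)\in u\Lambda^{e_1}v\not\subset E$ even though $\mu(0,e_2)\in E$. Your explicit flagging of that colour asymmetry matches the paper's own case analysis.
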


\begin{proof}
From \eqref{expCK} and Lemma~\ref{expprod} we deduce that
\begin{align*}
\prod_{e\in E}(t_u&-t_et_e^*)=t_u-\sum_{e\in u\Lambda^{e_1}\cap E}t_et_e^*-\sum_{f\in u\Lambda^{e_2}\cap E}t_ft_f^*+\sum_{\{\mu\in \Lambda^{e_1+e_2}:\mu(0,e_i)\in E\text{ for $i=1,2$}\}}t_\mu t_\mu^*\\
&=t_u-\sum_{e\in u\Lambda^{e_1}\{u,w\}}t_et_e^*-\sum_{f\in u\Lambda^{e_2}\{u,v\}}t_ft_f^*+\sum_{\{\mu\in \Lambda^{e_1+e_2}:\mu(0,e_i)\in E\text{ for $i=1,2$}\}}t_\mu t_\mu^*.
\end{align*}
To understand the last term, we claim that $\mu\in u\Lambda^{e_1+e_2}$ has $\mu(0,e_1)\in E$ and $\mu(0,e_2)\in E$ if and only if $s(\mu)=u$ or $s(\mu)=v$. The point is that if  $s(\mu)=u$ or $s(\mu)=v$ then $s(\mu(0,e_i))=u$ for $i=1,2$, and $u\Lambda^1u\subset E$. The alternative is that $s(\mu)=w$, and then $\mu(0,e_1)$ belongs to $u\Lambda^{e_1}v$, which is not in $E$. Thus 
\[
\{\mu\in \Lambda^{e_1+e_2}:\mu(0,e_i)\in E\text{ for $i=1,2$}\}=u\Lambda^{e_1+e_2}\{u,v\},
\]
and this completes the proof.
\end{proof}

\begin{cor}
Suppose that $\Lambda$ is a $2$-graph with skeleton described in Example~\ref{trickyex}. Then the Cuntz--Krieger algebra is the quotent of $\TC^*(\Lambda)$ by the Cuntz--Krieger relations \eqref{CKv} and 
\[
t_u=\sum_{e\in u\Lambda^{e_1}\{u,w\}}t_et_e^*+\sum_{f\in u\Lambda^{e_2}\{u,v\}}t_ft_f^*-\sum_{\mu\in u\Lambda^{e_1+e_2}\{u,v\}}t_\mu t_\mu^*.
\]
\end{cor}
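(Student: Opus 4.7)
The plan is to show that the two displayed relations, which are manifestly instances of (CK), generate every other (CK) relation of $\TC^*(\Lambda)$; granting this, the Cuntz--Krieger algebra coincides with the claimed quotient. Since $w$ is an absolute source, $w\Lambda^1=\emptyset$ and (CK) is vacuous at $w$, so I need only audit the relations at $v$ and at $u$.

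At $v$, the key observation is that $v$ receives no blue edges and that every red edge into $v$ comes from $w$, whence $v\Lambda^1=v\Lambda^{e_2}w$. Applying Lemma~\ref{idFE} to each such edge $f$ (taking $\mu$ to be the trivial path at the absolute source $w$) forces $f$ into every finite exhaustive subset of $v\Lambda^1$. Hence the only such subset is $v\Lambda^{e_2}$ itself, and because the range projections of distinct edges of the same colour are orthogonal, $\prod_{f\in v\Lambda^{e_2}}(t_v-t_ft_f^*)$ collapses to $t_v-\sum_f t_ft_f^*$; this yields \eqref{CKv} as the unique (CK) relation at $v$.

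At $u$, Proposition~\ref{idminFE} singles out the minimum finite exhaustive set $E=(u\Lambda^1u)\cup(u\Lambda^{e_2}v)\cup(u\Lambda^{e_1}w)$, and Lemma~\ref{CKatu} rewrites $\prod_{e\in E}(t_u-t_et_e^*)=0$ as the displayed relation. For any other finite exhaustive $E'\subset u\Lambda^1$ one has $E\subset E'$, and I plan to invoke the identity
\[
\prod_{e\in E'}(t_u-t_et_e^*)=\Big(\prod_{e\in E}(t_u-t_et_e^*)\Big)\prod_{e\in E'\setminus E}(t_u-t_et_e^*)
\]
to deduce the (CK) relation for $E'$ from that for $E$.

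The one issue to handle with care is justifying that this factorisation is meaningful, i.e.\ that the factors $(t_u-t_et_e^*)$ commute pairwise; this follows from (T5) together with the unique-factorisation property for $k$-graphs, since $t_et_e^*t_ft_f^*$ expands symmetrically in $e,f$ as a sum $\sum_\mu t_\mu t_\mu^*$ over those paths $\mu\in u\Lambda^{d(e)\vee d(f)}$ with $\mu(0,d(e))=e$ and $\mu(0,d(f))=f$ (essentially the calculation performed in Lemma~\ref{expprod}). Once commutativity is recorded, everything else is routine bookkeeping with the preceding results of this section.
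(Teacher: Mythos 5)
Your argument is correct and follows essentially the same route as the paper, which states this corollary as an immediate consequence of Example~\ref{trickyex}, Proposition~\ref{idminFE} and Lemma~\ref{CKatu} together with the principle that the (CK) relation for a minimal exhaustive set implies the relations for all larger ones. Your explicit verification that the factors $(t_u-t_et_e^*)$ commute (via the symmetric (T5) expansion) is exactly the point the paper leaves implicit, and it is handled correctly.
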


\section{KMS states for the graphs of Example~\ref{ex2graph1source}}\label{secex2.2}

We wish to compute the KMS$_\beta$ states for a 2-graph $\Lambda$ with skeleton described in Example~\ref{ex2graph1source}. Such graphs have one absolute source $v$. We list the vertex set as $\{u,v\}$, and write $A_i$ for the vertex matrices, so that
\[
A_i=\begin{pmatrix}d_i&a_i\\0&0\end{pmatrix}\quad\text{for $i=1,2$.}
\]
We then fix $r\in (0,\infty)^2$, and consider the associated  dynamics $\alpha^r:\RR\to \Aut \TC^*(\Lambda)$ such that
\[
\alpha_t(t_\mu t_\nu^*)=e^{itr\cdot (d(\mu)-d(\nu))}t_\mu t_\nu^*.
\]
We then consider $\beta\in (0,\infty)$ such that 
\begin{equation}\label{hypbigbeta}
\beta r_i>\ln\rho(A_i)\quad\text{for $i=1$ and $i=2$.}
\end{equation} 
As observed at the start of \cite[\S8]{aHKR2}, even though $\Lambda$ has a source, we can still apply Theorem~6.1 of \cite{aHLRS2} to find the KMS$_\beta$ states. 

First we need to compute the vector 
$y=(y_u,y_v)\in [0,\infty)^{\Lambda^0}$ appearing in that theorem. We find:

\begin{lem}\label{lemcompy}
We have
\begin{align}
y_u&=\sum_{\mu\in \Lambda u}e^{-\beta r\cdot d(\mu)}=(1-d_1e^{-\beta r_1})^{-1}(1-d_2e^{-\beta r_2})^{-1},\quad \text{and}\label{ysubu}\\
y_v&=1+a_1e^{-\beta r_1}(1-d_1e^{-\beta r_1})^{-1}(1-d_2e^{-\beta r_2})^{-1}+a_2e^{-\beta r_2}(1-d_2e^{-\beta r_2})^{-1}.\label{ysubv}
\end{align}
\end{lem}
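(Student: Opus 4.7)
The plan is to enumerate the paths in $\Lambda u$ and $\Lambda v$ of each degree and then sum the resulting geometric series; the series converge because \eqref{hypbigbeta} gives $d_ie^{-\beta r_i}<1$, since the eigenvalues of $A_i$ are $d_i$ and $0$, so $\rho(A_i)=d_i$.

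For $y_u$: the only edges of $\Lambda$ with source $u$ are the $d_1$ blue loops and $d_2$ red loops at $u$, because there are no edges from $u$ to $v$. Consequently every $\mu\in\Lambda u$ is a product of loops at $u$ and has range $u$. Reading off $(A_1^{n_1}A_2^{n_2})_{uu}$, or arguing directly from the factorisation property applied to the one-vertex sub-$2$-graph of loops at $u$, gives $|\Lambda^n u|=d_1^{n_1}d_2^{n_2}$ for every $n=(n_1,n_2)\in\NN^2$. Then
\[
y_u=\sum_{n\in\NN^2}d_1^{n_1}d_2^{n_2}e^{-\beta(n_1r_1+n_2r_2)}
\]
factors as a product of two convergent geometric series, yielding \eqref{ysubu}.

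For $y_v$: I would use the red--blue factorisation $\mu=\mu^r\mu^b$ of each $\mu\in\Lambda v$, so that $d(\mu^r)=n_2e_2$, $d(\mu^b)=n_1e_1$ and $s(\mu^b)=v$. When $n_1\ge 1$ the source-most edge of $\mu^b$ must be one of the $a_1$ blue edges from $v$ to $u$ (there is no other blue edge out of $v$), and the remaining $n_1-1$ blue edges of $\mu^b$, together with all $n_2$ red edges of $\mu^r$, must be loops at $u$; this gives $a_1d_1^{n_1-1}d_2^{n_2}$ paths. When $n_1=0$ and $n_2\ge 1$, the analogous argument applied to $\mu=\mu^r$ gives $a_2d_2^{n_2-1}$ paths, while the only path of degree $0$ with source $v$ is the vertex $v$ itself. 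Splitting $y_v=\sum_n|\Lambda^n v|e^{-\beta r\cdot n}$ into these three cases and collapsing the independent geometric series in $n_1$ and $n_2$ produces \eqref{ysubv}.

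The only subtlety is the choice of factorisation: the red--blue factorisation isolates the ``incoming'' edge at $v$ on the blue side for $n_1\ge 1$ and so produces the $a_1$ factor in the middle term of \eqref{ysubv} directly. A blue--red factorisation would yield an $a_2$ factor in that term instead, which one would have to convert via $d_1a_2=d_2a_1$; either route works, but the red--blue choice avoids invoking the commutation relation.
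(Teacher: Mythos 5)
Your proof is correct and follows essentially the same route as the paper: count $|\Lambda^n u|=d_1^{n_1}d_2^{n_2}$ via the factorisation property and sum the geometric series, then enumerate $\Lambda v$ by isolating the unique edge leaving $v$ at the source end (the paper phrases this as the decomposition $\Lambda v=\{v\}\cup\bigcup_{e\in\Lambda^{e_1}v}(\Lambda u)e\cup\bigcup_l\Lambda^{(l+1)e_2}v$, which is your three cases). Your closing observation about the alternative factorisation producing an $a_2$ in the middle term, reconciled via $d_1a_2=d_2a_1$, is exactly the content of the paper's Remark following the lemma.
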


\begin{proof}
We first evaluate
\[
y_u:=\sum_{\mu\in \Lambda u}e^{-\beta r\cdot d(\mu)}=\sum_{n\in \NN^2}\sum_{\mu\in \Lambda^n u}e^{-\beta r\cdot n}.
\]
Each path of degree $n$ is uniquely determined by (say) its blue-red factorisation. Then  we have $d_1^{n_1}$ choices for the blue path and $d_2^{n_2}$ choices for the red path. Thus
\begin{align*}
y_u&=
\sum_{n\in \NN^2} d_1^{n_1}d_2^{n_2}e^{-\beta(n_1r_1+n_2r_2)}=\sum_{n\in \NN^2}(d_1e^{-\beta r_1})^{n_1}(d_2e^{-\beta r_2})^{n_2}\\
&=\Big(\sum_{n_1=0}^\infty (d_1e^{-\beta r_1})^{n_1}\Big)\Big(\sum_{n_2=0}^\infty (d_2e^{-\beta r_2})^{n_2}\Big), 
\end{align*}
and summing the geometric series gives \eqref{ysubu}.

To compute $y_v$, we need to list the distinct paths $\mu$ in $\Lambda v$. First, if $d(\mu)_1>0$, then $\mu$ has a factorisation $\mu=\nu f$ with $d(f)=e_1$. Note that $s(f)=s(\mu)=v$, and hence $s(\nu)=r(f)=u$, so $\nu\in \Lambda u$. Otherwise we have $d(\mu)\in \NN{e_2}$, and $\Lambda v$ is the disjoint union of the singleton $\{v\}$, $\bigcup_{e\in \Lambda^{e_1}v}(\Lambda u)e$,  and $\bigcup_{l=0}^\infty\Lambda^{(l+1)e_2}v$. Counting the three sets gives 
\[
y_v=1+a_1e^{-\beta r_1} y_u+\sum_{l=0}^\infty a_2d_2^le^{-\beta(l+1)r_2}=1+a_1e^{-\beta r_1}y_u+a_2e^{-\beta r_2}(1-d_2e^{-\beta r_2})^{-1},
\]
and hence we have \eqref{ysubv}. \end{proof}

\begin{rmk}\label{rmkaltformyv}
We made a choice when we computed $y_v$: we considered the complementary cases $d(\mu)_1>0$ and $d(\mu)_1=0$. We could equally well have chosen to use the cases $d(\mu)_2>0$ and $d(\mu)_2=0$, and we would have found
\begin{equation}\label{altformyv}
y_v=1+a_2e^{-\beta r_2}(1-d_1e^{-\beta r_1})^{-1}(1-d_2e^{-\beta r_2})^{-1}+a_1e^{-\beta r_1}(1-d_1e^{-\beta r_1})^{-1},
\end{equation}
which looks different. To see that they are in fact equal, we look at the difference. To avoid messy formulas, we write $\Delta:=(1-d_2e^{-\beta r_2})(1-d_2e^{-\beta r_2})$, and observe that, for example, $(1-d_1e^{-\beta r_1})^{-1}=(1-d_2e^{-\beta r_2})\Delta^{-1}$. Then the difference $\eqref{ysubv} -\eqref{altformyv}$ is
\begin{align*}
a_1e^{-\beta r_1}\Delta^{-1}&+a_2e^{-\beta r_2}(1-d_1e^{-\beta r_1})\Delta^{-1}-
a_2e^{-\beta r_2}\Delta^{-1}-a_1e^{-\beta r_1}(1-d_2e^{-\beta r_2})\Delta^{-1}.
\end{align*}
When we expand the brackets we find that the terms $a_1e^{-\beta r_1}\Delta^{-1}$ and $a_2e^{-\beta r_2}\Delta^{-1}$ cancel out, leaving 
\[
-a_2e^{-\beta r_2}d_1e^{-\beta r_1}\Delta^{-1}+a_1e^{-\beta r_1}d_2e^{-\beta r_2}\Delta^{-1}=(-a_2d_1+a_1d_2)e^{-\beta(r_1+r_2)}\Delta^{-1},
\]
which vanishes because the factorisation property forces $a_1d_2=a_2d_1$.
\end{rmk}

We recall that we are considering $\beta$ satisfying \eqref{hypbigbeta}.
The first step in the procedure of \cite[\S8]{FaHR} for such $\beta$ is to apply \cite[Theorem~6.1]{aHLRS2}. Then the KMS states of $(\TC^*(\Lambda),\alpha^r)$ have the form $\phi_\epsilon$ for $\epsilon\in [0,\infty)^{\{u,v\}}$ satisfying $\epsilon\cdot y=1$. This is a $1$-dimensional simplex with extreme points $(y_u^{-1},0)$ and $(0,y_v^{-1})$. The values of the state $\phi_\epsilon$ on the vertex projections $t_u$ and $t_v$ are the coordinates of the vector
\[
m=\Big(\prod_{i=1}^2(1-e^{-\beta r_i}A_i)^{-1}\Big)\epsilon.
\]
To find $m$, we compute
\[
\prod_{i=1}^2(1-e^{-\beta r_i}A_i)^{-1}=\Big(\prod_{i=1}^2(1-d_ie^{-\beta r_i})^{-1}\Big)\begin{pmatrix}
1&a_2e^{-\beta r_2}+a_1e^{-\beta r_1}(1-d_2e^{-\beta r_2})\\0&(1-d_1e^{-\beta r_1})(1-d_2e^{-\beta r_2})\end{pmatrix}.
\]

For the first extreme point $\epsilon=(y_u^{-1},0)$, we get $m=(1,0)$ and the corresponding KMS$_\beta$ state $\phi_1$ satisfies 
\begin{equation}\label{values1onvertices}
\begin{pmatrix}\phi_1(t_u)\\\phi_1(t_v)\end{pmatrix}=
\begin{pmatrix}1\\
0
\end{pmatrix}.
\end{equation} 
Lemma~6.2 of \cite{AaHR} (for example) implies that $\phi$ factors through a state of the quotient by the ideal of $\TC^*(\Lambda)$ generated by $t_v$, which is the ideal denoted $I_{\{v\}}$ in \cite[\S2.4]{FaHR}. Thus the quotient is $\TC^*(\Lambda\backslash \{v\})=\TC^*(u\Lambda u)$. The general theory of \cite{aHLRS2} says that $(\TC^*(u\Lambda u),\alpha^r)$ has a unique KMS$_\beta$ state $\psi$, and we therefore have $\phi_1=\psi\circ q_{\{v\}}$, wher $q_{\{w\}}$ is the quotient map of $\TC^*(\Lambda)$ onto $\TC^*(\Lambda\backslash\{w\})$ for the hereditary subset $\{w\}$ of $\Lambda^0$ from \cite[Proposition~2.2]{aHKR2}. 

Now we consider the other extreme point $\epsilon=(0,y_v^{-1})$. This yields a KMS$_\beta$ state $\phi_2$ such that
\begin{equation}\label{valuesonvertices}
\begin{pmatrix}\phi_2(t_u)\\\phi_2(t_v)\end{pmatrix}=
\begin{pmatrix}y_v^{-1}\big(\textstyle{\prod_{i=1}^2(1-d_ie^{-\beta r_i})^{-1}}\big)\big(a_2e^{-\beta r_2}+a_1e^{-\beta r_1}-a_1d_2e^{-\beta(r_1+r_2)}\big)\\
y_v^{-1}
\end{pmatrix}.
\end{equation}
Because this vector $\epsilon$ is supported on the absolute source $v$, Proposition~8.2 of \cite{aHKR2} implies that $\phi_2$ factors through a state of $(C^*(\Lambda), \alpha^r)$ (and we can also verify this directly --- see the remark below).

We summarise our findings as follows.

\begin{prop}
Suppose that $\Lambda$, $r$ and $\beta$ are as described at the start of the section. Then $(\TC^*(\Lambda),\alpha^r)$ has a 1-dimensional simplex of KMS$_\beta$ states with extreme points $\phi_1$ and $\phi_2$ satisfying \eqref{values1onvertices} and \eqref{valuesonvertices}. The KMS state $\phi_1$ factors through a state $\psi$ of $\TC^*(u\Lambda u)$, and the KMS state $\phi_2$ factors through a state of $C^*(\Lambda)$.
\end{prop}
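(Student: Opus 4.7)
The proof is essentially a compilation of the analysis that precedes the statement. The plan is to invoke \cite[Theorem~6.1]{aHLRS2} --- which, as already noted at the start of the section and in \cite[\S8]{aHKR2}, applies despite the presence of the source $v$ because $\beta$ satisfies \eqref{hypbigbeta} --- to parametrise the KMS$_\beta$ states by the vectors $\epsilon\in [0,\infty)^{\{u,v\}}$ satisfying $\epsilon\cdot y=1$, where $y=(y_u,y_v)$ has the components computed in Lemma~\ref{lemcompy}. Since both $y_u$ and $y_v$ are strictly positive, the constraint set is a $1$-dimensional simplex with extreme points $(y_u^{-1},0)$ and $(0,y_v^{-1})$; call the corresponding extremal KMS$_\beta$ states $\phi_1$ and $\phi_2$.

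Next I would read off the values on the vertex projections from the matrix formula for $\prod_{i=1}^2(1-e^{-\beta r_i}A_i)^{-1}$ computed above. For $\epsilon=(y_u^{-1},0)$ the upper-triangular structure of the matrix, together with the formula \eqref{ysubu} for $y_u$, immediately yields $m=(1,0)$ and hence \eqref{values1onvertices}. For $\epsilon=(0,y_v^{-1})$ the $(u,v)$-entry of the matrix, together with the identity $(1-d_1e^{-\beta r_1})(1-d_2e^{-\beta r_2})\cdot(1-d_2e^{-\beta r_2})^{-1}=(1-d_1e^{-\beta r_1})$, yields the coordinates in \eqref{valuesonvertices} after routine algebra.

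The two factorisation claims come from results already in the literature. For $\phi_1$: since $\phi_1(t_v)=0$ and $t_v$ is a positive element, \cite[Lemma~6.2]{AaHR} forces $\phi_1$ to vanish on the whole ideal $I_{\{v\}}$ generated by $t_v$, so $\phi_1$ descends to a state of $\TC^*(\Lambda)/I_{\{v\}}$, which by \cite[Proposition~2.2]{aHKR2} is identified with $\TC^*(\Lambda\setminus\{v\})=\TC^*(u\Lambda u)$. Since $u\Lambda u$ is the strongly connected single-vertex subgraph, \cite[Theorem~6.1]{aHLRS2} provides a unique KMS$_\beta$ state $\psi$ on $\TC^*(u\Lambda u)$, and we then have $\phi_1=\psi\circ q_{\{v\}}$. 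For $\phi_2$: the extreme vector $\epsilon=(0,y_v^{-1})$ is supported entirely on the absolute source $v$, and \cite[Proposition~8.2]{aHKR2} then gives directly that $\phi_2$ factors through a state of $C^*(\Lambda)$.

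The main obstacle is purely bookkeeping: one must check that \cite[Theorem~6.1]{aHLRS2} genuinely parametrises \emph{all} KMS$_\beta$ states in the presence of the source (which was established in \cite[\S8]{aHKR2}), and that the matrix arithmetic for $\phi_2(t_u)$ is simplified correctly. Neither is a conceptual obstacle, so the proof is essentially a careful assembly of the cited results and the computations already carried out.
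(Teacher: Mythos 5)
Your proposal is correct and follows essentially the same route as the paper: the Proposition is stated there as a summary of the preceding computations, namely applying \cite[Theorem~6.1]{aHLRS2} to get the simplex $\{\epsilon:\epsilon\cdot y=1\}$, reading off the vertex values from $\prod_{i=1}^2(1-e^{-\beta r_i}A_i)^{-1}\epsilon$, using \cite[Lemma~6.2]{AaHR} to factor $\phi_1$ through $\TC^*(u\Lambda u)$, and citing \cite[Proposition~8.2]{aHKR2} for $\phi_2$ factoring through $C^*(\Lambda)$ because $\epsilon$ is supported on the absolute source $v$. No discrepancies to report.
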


\begin{rmk}\label{reality2verts} 
At this stage we can do some reassuring reality checks. First, we check that $\phi_2(t_u)+\phi_2(t_v)=1$. We multiply through by $y_v$ to take the $y_v^{-1}$ out. Then we compute using that $a_1d_2=a_2d_1$:
\begin{align*}
y_v\phi_2(t_u)&+y_v\phi_2(t_v)=y_v\phi_2(t_u)+1\\
&=\big(\textstyle{\prod_{i=1}^2(1-d_ie^{-\beta r_i})^{-1}}\big)\big(a_2e^{-\beta r_2}+a_1e^{-\beta r_1}-a_1d_2e^{-\beta(r_1+r_2)}\big)+1\\
&=\big(\textstyle{\prod_{i=1}^2(1-d_ie^{-\beta r_i})^{-1}}\big)\big(a_2e^{-\beta r_2}+a_1e^{-\beta r_1}-a_2e^{\beta r_2}d_1e^{-\beta r_1}\big)+1\\
&=\big(\textstyle{\prod_{i=1}^2(1-d_ie^{-\beta r_i})^{-1}}\big)\big(a_2e^{-\beta r_2}(1-d_1e^{-\beta r_1})+a_1e^{-\beta r_1}\big)+1\\
&=a_2e^{-\beta r_2}(1-d_2e^{-\beta r_2})^{-1}+a_1e^{-\beta r_1}(1-d_1e^{-\beta r_1})^{-1}(1-d_2e^{-\beta r_2})^{-1}+1,
\end{align*}
which is the formula for $y_v$ reshuffled.

Next, we verify directly that $\phi_2$ factors through a state of $C^*(\Lambda)$. We saw in Example~\ref{ex2graph1source} that the only finite exhaustive subset of $u\Lambda^1$ is $u\Lambda^1$, and then Corollary~\ref{FEex2.2} implies that 
\begin{equation}\label{expandKMS2.2}
\phi_{2}\Big(\prod_{e\in u\Lambda^1}(t_u-t_et_e^*)\Big)=
\phi_2\Big(t_u-\sum_{e\in u\Lambda^1}t_et_e^*-\sum_{f\in u\Lambda^1}t_ft_f^*+\sum_{\mu\in u\Lambda^{e_1+e_2}}t_\mu t_\mu^*\Big).
\end{equation}
Now we break each sum into two sums over subsets of $u\Lambda u$ and $u\Lambda v$, and apply the KMS condition to each $\phi_{2}(t_\mu t_\mu^*)=e^{-\beta r\cdot d(\mu)}\phi_2(t_{s(\mu)})$. We find that \eqref{expandKMS2.2} is
\[
(1-d_1e^{-\beta r_1})(1-d_2e^{-\beta r_2})\phi_2(t_u)-\big(a_1e^{-\beta r_1}+a_2e^{-\beta r_2}-a_1d_2e^{-\beta(r_1+r_2)}\big)\phi_{2}(t_v),
\]
which vanishes by \eqref{valuesonvertices}. Now the standard argument (using, for example, \cite[Lemma~6.2]{AaHR}) shows that $\phi_2$ factors though a state of the Cuntz--Krieger algebra $C^*(\Lambda)$, which by Example~\ref{ex2graph1source} is the quotient of $\TC^*(\Lambda)$ by the single Cuntz--Krieger relation $\textstyle{\prod_{e\in u\Lambda^1}(t_u-t_et_e^*)}=0$.
\end{rmk}

\section{KMS states for the graphs of Example~\ref{trickyex}}\label{sectrickyex}

We now consider a $2$-graph $\Lambda$ with the skeleton described in Example~\ref{trickyex}. Such graphs have one absolute source $w$, and $\Lambda\backslash\{w\}$ is the graph discussed in the previous section. As usual, we consider a dynamics determined by $r\in (0,\infty)^2$, and we want to use Theorem~6.1 of \cite{aHLRS2} to find the KMS$_\beta$  states for $\beta$ satisfying $\beta r_i>\ln\rho(A_i)$.  Our first task is to find the vector $y=(y_u,y_v,y_w)$.

Since the sets $\Lambda u$ and $\Lambda v$ lie entirely in the subgraph with vertices $\{u,v\}$, the numbers $y_u:=\sum_{\mu\in \Lambda u} e^{-\beta r_i\cdot d(\mu)}$ and $y_v$ are given by Lemma~\ref{lemcompy}. So it remains to compute $y_w$. We find:

\begin{lem}\label{lemcompy2}
We define $\Delta:=(1-d_1e^{-\beta r_1})(1-d_2e^{-\beta r_2})$. Then we have
\begin{align}
y_u&=\Delta^{-1},\notag\\
y_v&=1+a_1e^{-\beta r_1}\Delta^{-1}+a_2e^{-\beta r_2}(1-d_2e^{-\beta r_2})^{-1}\notag\\
&=1+a_1e^{-\beta r_1}\Delta^{-1}+a_2e^{-\beta r_2}(1-d_1e^{-\beta r_1})\Delta^{-1},\quad \text{and}\label{ysubv2}\\
y_w&=1+b_2e^{-\beta r_2}+b_1e^{-\beta r_1}\Delta^{-1}+a_2b_2e^{-2\beta r_2}(1-d_2e^{-\beta r_2})^{-1}\label{ysubw}.
\end{align}
\end{lem}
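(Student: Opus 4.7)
The plan is to rely on Lemma~\ref{lemcompy} for $y_u$ and $y_v$, and to enumerate the set $\Lambda w$ directly to obtain $y_w$. The formulas for $y_u$ and $y_v$ transfer unchanged from Lemma~\ref{lemcompy}: the only edges with source in $\{u,v\}$ are edges already present in the two-vertex subgraph, so the sets $\Lambda u$ and $\Lambda v$ coincide in both graphs. I would then rewrite $y_u$ compactly as $\Delta^{-1}$, and for the second form of $y_v$ in \eqref{ysubv2} I would use the identity $(1-d_2 e^{-\beta r_2})^{-1} = (1-d_1 e^{-\beta r_1}) \Delta^{-1}$.

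For $y_w$ my plan is to partition $\Lambda w$ into three disjoint pieces: the vertex path $\{w\}$, contributing $1$; the paths $\mu \in \Lambda w$ with $d(\mu)_1 \geq 1$; and the paths $\mu \in \Lambda w$ with $d(\mu) \in \NN e_2 \setminus \{0\}$, that is, the nontrivial purely red paths in $\Lambda w$. For the middle piece I would factor $\mu = \nu f$ uniquely with $d(f) = e_1$: since $s(f) = s(\mu) = w$ forces $r(f) = u$, the edge $f$ is one of the $b_1$ blue edges $w \to u$ and $\nu \in \Lambda u$ is arbitrary, so this piece contributes $b_1 e^{-\beta r_1} y_u = b_1 e^{-\beta r_1} \Delta^{-1}$.

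For the purely red piece I would chase through the restricted graph of red edges, which is the chain $w \to v \to u$ together with the $d_2$ loops at $u$. A purely red path in $\Lambda w$ of length $\ell = 1$ contributes $b_2$, while for $\ell \geq 2$ such a path is forced to start with the edge $w \to v$, then an edge $v \to u$, then $\ell - 2$ loops at $u$, giving the count $b_2 a_2 d_2^{\ell - 2}$. Summing the resulting geometric series would yield $b_2 e^{-\beta r_2} + a_2 b_2 e^{-2\beta r_2}(1 - d_2 e^{-\beta r_2})^{-1}$, and adding the three contributions produces \eqref{ysubw} exactly.

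The main obstacle is to choose a decomposition of $\Lambda w$ that is genuinely disjoint. Summing separately over ``paths ending in a blue edge'' and ``paths ending in a red edge'' would double-count every path of mixed degree; the split above (``has a blue edge somewhere'' versus ``purely red'') neatly avoids the overlap, and has the pleasant side effect that the factorisation identities $d_1 a_2 = d_2 a_1$ and $a_1 b_2 = d_2 b_1$ are not needed in the derivation, although they could be used to verify consistency between alternative forms of the answer.
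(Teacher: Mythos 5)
Your proposal is correct and follows essentially the same route as the paper: $y_u$ and the two forms of $y_v$ are imported from Lemma~\ref{lemcompy} (since $\Lambda u$ and $\Lambda v$ lie in the subgraph on $\{u,v\}$) together with the rewriting $(1-d_2e^{-\beta r_2})^{-1}=(1-d_1e^{-\beta r_1})\Delta^{-1}$, and $y_w$ is obtained by splitting $\Lambda w$ into the paths with $d(\mu)\geq e_1$, which are in bijection with $\Lambda u\times u\Lambda^{e_1}w$ and contribute $b_1e^{-\beta r_1}y_u$, and the purely red paths in $\Lambda^{\NN e_2}w$, counted as $1$, $b_2$, and $a_2b_2d_2^{\ell-2}$ for $\ell\geq 2$. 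The only cosmetic difference is that you list the vertex path $w$ as a separate third piece, whereas the paper absorbs it into the purely red contribution; your closing observation that this decomposition avoids both double-counting and any use of the factorisation identities matches the remark the paper makes after the lemma.
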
 

\begin{proof}
As foreshadowed above, the formula for $y_u$ and the first formula for $y_v$ follow from Lemma~\ref{lemcompy}. The formula \eqref{ysubv2} is just a rewriting of the previous one which will be handy in computations (and this trick will be used a lot later). 

To find $y_w$, we consider the paths $\mu=\nu e$ with $e\in \Lambda^{e_1}w$ and $\nu\in \Lambda u$ (these are the ones with $d(\mu)\geq e_1$). There are $b_1$ such $e$, and hence we have a contribution $b_1e^{-\beta r_1}y_u=b_1e^{-\beta r_1}\Delta^{-1}$ to $y_w$. The remaining paths are in $\Lambda^{\NN e_2}w$, and give a contribution of 
\begin{align*}
1+b_2e^{-\beta r_2}+b_2e^{-\beta r_2}&a_2e^{-\beta r_2}\sum_{l=0}^\infty d_2^le^{-\beta r_2l}\\&=1+b_2e^{-\beta r_2}+b_2e^{-\beta r_2}a_2e^{-\beta r_2}(1-d_2e^{-\beta r_2})^{-1}.
\end{align*}
Adding the two contributions gives \eqref{ysubw}.
\end{proof}

\begin{rmk}
We could also have computed $y_w$ by counting the paths with $d(\mu)\geq e_2$ and those in $\Lambda^{\NN e_1}w$. This gives 
\begin{equation}\label{ywalt}
y_w=1+b_1e^{-\beta r_1}(1-d_1e^{-\beta r_1})^{-1}+b_2e^{-\beta r_2}y_v.
\end{equation}
We found the check that this is the same as the right-hand side of \eqref{ysubw} instructive. First, we use the alternative formula \eqref{altformyv} for $y_v$ (whose proof in Remark~\ref{rmkaltformyv} used the crucial identity $a_1d_2=a_2d_1$). Then 
the right-hand side of \eqref{ywalt} becomes
\begin{align*}
1+b_1e^{-\beta r_1}&(1-d_1e^{-\beta r_1})^{-1}\\
&+b_2e^{-\beta r_2}\big(1+a_2e^{-\beta r_2}\Delta^{-1}+a_1e^{-\beta r_1}(1-d_1e^{-\beta r_1})^{-1}\big).
\end{align*}
Now we write $(1-d_1e^{-\beta r_1})^{-1}=(1-d_2e^{-\beta r_2})\Delta^{-1}$, similarly for $(1-d_1e^{-\beta r_1})^{-1}$, and expand the brackets: we get
\begin{align*}
1+b_1&e^{-\beta r_1}\Delta^{-1}-b_1d_2e^{-\beta(r_1+r_2)}\Delta^{-1}\\
&+b_2e^{-\beta r_2}+b_2a_2e^{-2\beta r_2}\Delta^{-1}+b_2a_1e^{-\beta(r_1+r_2)}\Delta^{-1}-b_2a_1d_2e^{-\beta(r_1+2r_2)}\Delta^{-1}.
\end{align*}
Now we recall from Example~\ref{trickyex} that $b_1d_2=b_2a_1$, and hence the third and sixth terms cancel. Next we use the identity $a_1d_2=a_2d_1$ in the last term. We arrive at
\begin{align*}
1+b_1e^{-\beta r_1}\Delta^{-1}&+b_2e^{-\beta r_2}+b_2a_2e^{-2\beta r_2}\Delta^{-1}
-b_2a_2d_1e^{-\beta(r_1+2r_2)}\Delta^{-1}\\
&=1+b_1e^{-\beta r_1}\Delta^{-1}+b_2e^{-\beta r_2}+b_2a_2e^{-2\beta r_2}(1-d_1e^{-\beta r_1})\Delta^{-1}\\
&=1+b_1e^{-\beta r_1}\Delta^{-1}+b_2e^{-\beta r_2}+b_2a_2e^{-2\beta r_2}(1-d_2e^{-\beta r_2})^{-1},
\end{align*}
which is the the formula for $y_w$ in \eqref{ysubw}. We find it reassuring that we had to explicitly use both relations $b_1d_2=b_2a_1$ and $a_1d_2=a_2d_1$ that are imposed on us by the assumption that our coloured graph is the skeleton of a $2$-graph.
\end{rmk}

Theorem~6.1 of \cite{aHLRS2} says that for each $\beta$ satisfying $\beta r_i>\ln\rho(A_i)$ for $i=1,2$, there is a simplex of KMS$_\beta$ states $\phi_\epsilon$ on $(\TC^*(\Lambda),\alpha^r)$ parametrised by the set
\[
\Delta_\beta=\big\{\epsilon\in [0,\infty)^{\{u,v,w\}}:\epsilon\cdot y=1\big\}.
\]
Here, the set $\Delta_\beta$ is a $2$-dimensional simplex with extreme points $e_u:=(y_u^{-1},0,0)$, $e_v:=(0,y_v^{-1},0)$, and $e_w=(0,0,y_w^{-1})$. The values of $\phi_\epsilon$  on the vertex projections are the entries in the vector $m(\epsilon)=\prod_{i=1}^2(1-e^{-\beta r_i}A_i)^{-1}\epsilon$. Since the matrices $1-e^{-\beta r_i}A_i$ are upper-triangular, so are their inverses, and we deduce that both $m(e_u)$ and $m(e_v)$ have final entry $m(e_u)_w=0=m(e_v)_w$. So the corresponding KMS states are the compositions of the states of $\big(\TC^*(\Lambda\backslash\{w\}), \alpha^{(r_1,r_2)}\big)$ with the quotient map $q_{\{w\}}:\TC^*(\Lambda)\to \TC^*(\Lambda\backslash\{w\})$.  Thus the extreme points of the simplex of KMS$_\beta$ states of $(\TC^*(\Lambda),\alpha^r)$ are $\phi_1\circ q_{\{w\}}=(\psi\circ q_{\{v\}})\circ q_{\{w\}}$, $\phi_2\circ q_{\{w\}}$ and $\psi_3:=\phi_{e_w}$.  

\begin{rmk} 
We recall from the end of the previous section that the state $\phi_2$ of $\TC^*(\Lambda\backslash\{w\})$ factors through a state of the Cuntz--Krieger algebra $C^*(\Lambda\backslash\{w\})$. So it is tempting to ask whether $\phi_2\circ q_{\{w\}}$ factors through a state of $C^*(\Lambda)$. Hoewever, this is  not the case. The point is that in the graph $\Lambda\backslash\{w\}$, the vertex $v$ is an absolute source, and hence there is no Cuntz--Krieger relation involving $t_v$. However, in the larger graph $\Lambda$, $v$ is not an absolute source: the set $v\Lambda^{e_2}$ is a nontrivial finite exhaustive subset of $v\Lambda^1$, and hence the Cuntz--Krieger family generating $C^*(\Lambda)$ must satisfy the relation
\[
\prod_{e\in v\Lambda^2}(t_v-t_et_e^*)=0\Longleftrightarrow t_v-\sum_{e\in e\Lambda^{e_2}}t_et_e^*=0.
\]
The KMS condition implies that the state $\phi:=\phi_2\circ q_{\{w\}}$ satisfies 
\[
\phi(t_et_e^*)=e^{-\beta r_2}\phi(t_{s(e)})=e^{-\beta r_2}\phi(t_w)=e^{-\beta r_2}\phi_2\circ q_{\{w\}}(t_w)=e^{-\beta r_2}\phi_2(0)=0
\]
for all $e\in v\Lambda^{e_2}$. Since we know from \eqref{valuesonvertices} that $\phi(t_v)=\phi_2(t_v)=y_v^{-1}$ is not zero, we deduce that 
\[
\phi\Big(t_v-\sum_{e\in v\Lambda^{e_2}}t_et_e^*\Big)=\phi(t_v)\not=0.
\] 
Thus $\phi=\phi_2\circ q_{\{w\}}$ does not factor through a state of $C^*(\Lambda)$.
\end{rmk} 

We now focus on the new extreme point is $\phi_{e_w}$. To compute it, we need to calculate $\prod_{i=1}^2(1-e^{-\beta r_i}A_i)^{-1}$. Since the matrices $A_1$ and $A_2$ commute, so do the matrices $1-e^{-\beta r_i}A_i$, and it suffices to compute the inverse of 
\begin{align*}
\prod_{i=1}^2(1&-e^{-\beta r_i}A_i)=\\
&=\begin{pmatrix}
\Delta&-(1-d_1e^{-\beta r_1})a_2e^{-\beta r_2}-a_1e^{-\beta r_1}&a_1b_1b_2^{-\beta(2r_1+r_2)}-b_1e^{-\beta r_1}1\\
0&1&-b_2e^{-\beta r_2}\\
0&0&1
\end{pmatrix},
\end{align*}
where as before we write $\Delta=\prod_{i=1}^2(1-d_ie^{-\beta r_i})$. We find that the inverse is 
\[
\Delta^{-1}\begin{pmatrix}1&(1-d_1e^{-\beta r_1})a_2e^{-\beta r_2}+a_1e^{-\beta r_1}&(1-d_1e^{-\beta r_1})a_2b_2e^{-2\beta r_2}+b_1e^{-\beta r_1}\\
0&\Delta&\Delta b_2e^{-\beta r_2}\\
0&0&\Delta
\end{pmatrix}.
\]
Thus the corresponding KMS$_\beta$ state $\phi_{e_w}$ satisfies
\begin{equation}\label{charphi3}
\begin{pmatrix}
\phi_{e_w}(t_u)\\ \phi_{e_w}(t_v)\\ \phi_{e_w}(t_w)
\end{pmatrix}=\begin{pmatrix}
\Delta^{-1}\big((1-d_1e^{-\beta r_1})a_2b_2e^{-2\beta r_2}+b_1e^{-\beta r_1}\big) y_w^{-1}\\
b_2e^{-\beta r_2}y_w^{-1}\\
y_w^{-1}
\end{pmatrix}.
\end{equation}

\begin{rmk}
As usual, we take the opportunity for a reality check: since $t_u+t_v+t_w$ is the identity of $\TC^*(\Lambda)$ and $\phi_{e_w}$ is a state, we must have $\phi_{e_w}(t_u)+\phi_{e_w}(t_v)+\phi_{e_w}(t_w)=1$. But since $\Delta^{-1}(1-d_1e^{-\beta r_1})=(1-e^{-\beta r_2})^{-1}$, the formula \eqref{ysubw} says that this sum is precisely $y_wy_w^{-1}=1$.
\end{rmk}

We summarise our findings in the following theorem.

\begin{thm}\label{KMSabovecrit}
Suppose that $\Lambda$ is a $2$-graph with skeleton described in Example~\ref{trickyex} and vertex matrices $A_1$,  $A_2$. We suppose that $r\in (0,\infty)^{\{u,v,w\}}$, and consider the dynamics $\alpha^r$ on $\TC^*(\Lambda)$.  We suppose that  $\beta>0$ satisfies $\beta r_i>\ln\rho(A_i)$ for $i=1,2$. We write $\phi_1$ and $\phi_2$ for the KMS$_\beta$ states of $(\TC^*(\Lambda\backslash\{w\}), \alpha^{r})$ described before Remark~\ref{reality2verts}. Then $\phi_1\circ q_{\{w\}}$ and $\phi_2\circ q_{\{w\}}$ are KMS$_\beta$ states of $(\TC^*(\Lambda),\alpha^r)$. There is another KMS$_\beta$ state $\phi_3=\phi_{e_w}$ satisfying \eqref{charphi3}. Every KMS$_\beta$ state of $(\TC^*(\Lambda),\alpha^r)$ is a convex combination of the three states $\phi_1\circ q_{\{w\}}$, $\phi_2\circ q_{\{w\}}$ and $\phi_3$. None of these KMS$_\beta$ states factors through a state of $(C^*(\Lambda),\alpha^r)$. 
\end{thm}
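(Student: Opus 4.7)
The plan is to deduce the classification of KMS$_\beta$ states from \cite[Theorem~6.1]{aHLRS2} (still applicable despite the source at $v$, as observed in \cite[\S8]{aHKR2}), and then to verify the non-factoring assertions state by state by testing the Cuntz--Krieger relations identified in Section~\ref{exhsets}.

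For the classification, \cite[Theorem~6.1]{aHLRS2} supplies the simplex $\Delta_\beta=\{\epsilon\in[0,\infty)^{\{u,v,w\}}:\epsilon\cdot y=1\}$ of KMS$_\beta$ states with extreme points $e_u$, $e_v$, $e_w$. The matrix $\prod_i(1-e^{-\beta r_i}A_i)^{-1}$ recorded above is upper-triangular, so $m(e_u)$ and $m(e_v)$ have zero $w$-entry; the extremal states $\phi_{e_u}$ and $\phi_{e_v}$ therefore annihilate the hereditary ideal $I_{\{w\}}$ and descend through $q_{\{w\}}$ to the states $\phi_1$ and $\phi_2$ of $\TC^*(\Lambda\setminus\{w\})$ produced in Section~\ref{secex2.2}. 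The remaining extreme state is $\phi_3:=\phi_{e_w}$, whose vertex values are the entries of $m(e_w)$ recorded in \eqref{charphi3}. That every KMS$_\beta$ state is a convex combination of the three follows directly from the simplex structure.

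For the non-factoring claim, the Cuntz--Krieger ideal of $C^*(\Lambda)$ is generated by the relation at $v$ coming from the exhaustive set $v\Lambda^{e_2}$ and the relation at $u$ coming from the minimal exhaustive set $E$ of Proposition~\ref{idminFE} (expanded in the Corollary after Lemma~\ref{CKatu}). For $\phi_2\circ q_{\{w\}}$ the Remark just before the theorem already performs the check at $v$: the KMS condition and $\phi(t_w)=0$ force $\phi(t_et_e^*)=0$ for all $e\in v\Lambda^{e_2}$, while $\phi(t_v)=y_v^{-1}\neq 0$. For $\phi_1\circ q_{\{w\}}$ the analogous check uses the relation at $u$: substituting $\phi(t_u)=1$, $\phi(t_v)=\phi(t_w)=0$ collapses the right-hand side of the Corollary formula to $d_1e^{-\beta r_1}+d_2e^{-\beta r_2}-d_1d_2e^{-\beta(r_1+r_2)}=1-\Delta$, which is strictly less than $1$ since the hypothesis $\beta r_i>\ln\rho(A_i)=\ln d_i$ forces $\Delta>0$.

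The delicate step is $\phi_3$. Substituting \eqref{charphi3} into the relation at $v$ reduces it to $\phi_3(t_v)=b_2e^{-\beta r_2}\phi_3(t_w)$, which holds identically by \eqref{charphi3}; and substituting into the relation at $u$ makes the right-hand side collapse, via the algebraic identity $\Delta\phi_3(t_u)y_w=b_1e^{-\beta r_1}+(1-d_1e^{-\beta r_1})a_2b_2e^{-2\beta r_2}$, to $\phi_3(t_u)$ itself. So the obstruction to $\phi_3$ factoring is not visible at the level of these generating CK equalities, and the main obstacle is to pinpoint a specific element of the Cuntz--Krieger ideal on which $\phi_3$ fails to vanish (or, failing that, to recognise that in analogy with the two-vertex situation where the extremal state supported on the absolute source factored through $C^*$, the theorem's final sentence is restricted to the two states $\phi_1\circ q_{\{w\}}$ and $\phi_2\circ q_{\{w\}}$ inherited from the subgraph).
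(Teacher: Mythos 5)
Your classification argument is exactly the paper's: \cite[Theorem~6.1]{aHLRS2} supplies the simplex $\Delta_\beta$ with extreme points $e_u,e_v,e_w$, upper-triangularity of $\prod_i(1-e^{-\beta r_i}A_i)^{-1}$ forces $m(e_u)_w=m(e_v)_w=0$ so that $\phi_{e_u}$ and $\phi_{e_v}$ descend through $q_{\{w\}}$ to $\phi_1$ and $\phi_2$, and $\phi_3=\phi_{e_w}$ is read off from the computed inverse; the paper's own proof consists of precisely this discussion plus the observation that the convex-combination assertion follows because $e_u,e_v,e_w$ are the extreme points. Your non-factoring checks for $\phi_1\circ q_{\{w\}}$ (the relation at $u$ gives $1=1-\Delta$, impossible since $\Delta>0$) and for $\phi_2\circ q_{\{w\}}$ (the relation at $v$, as in the Remark preceding the theorem) are both correct, and the first of these actually goes beyond the paper, which records only the $\phi_2\circ q_{\{w\}}$ case and says nothing about $\phi_1\circ q_{\{w\}}$.

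The difficulty you flag for $\phi_3$ is genuine, and your computation in fact resolves it --- just not in the direction the theorem's final sentence asserts. You have verified that $\phi_3$ vanishes on both generating Cuntz--Krieger projections $t_v-\sum_{f\in v\Lambda^{e_2}}t_ft_f^*$ and $\prod_{e\in E}(t_u-t_et_e^*)$ for the minimal exhaustive set $E$ of Proposition~\ref{idminFE}. Since $w$ is an absolute source there is no relation at $w$; since every finite exhaustive set contains the minimal one, the remaining Cuntz--Krieger projections are dominated by the two you checked; and the standard Cauchy--Schwarz/KMS argument via \cite[Lemma~6.2]{AaHR} --- the very argument the paper invokes for $\phi_2$ in Remark~\ref{reality2verts} --- then shows that $\phi_3$ annihilates the whole kernel of $\TC^*(\Lambda)\to C^*(\Lambda)$. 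So $\phi_3$ \emph{does} factor through a state of $(C^*(\Lambda),\alpha^r)$, consistently with \cite[Proposition~8.2]{aHKR2} applied to the vector $e_w$ supported on the absolute source $w$. Your second alternative (restricting the final sentence to $\phi_1\circ q_{\{w\}}$ and $\phi_2\circ q_{\{w\}}$) is the correct resolution: there is no element of the Cuntz--Krieger ideal on which $\phi_3$ fails to vanish, and the paper offers no argument for the final sentence as it applies to $\phi_3$. Apart from this point --- which is a defect of the statement rather than of your proof --- your argument is complete and follows the paper's route.
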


The only thing we haven't proved is the assertion that every KMS state is a convex combination of the states that we have described. But this follows from the general results in \cite[Theorem~6.1]{aHLRS2}, because the vectors $(y_u^{-1},0,0)$, $(0,y_v^{-1},0)$ and $(0,0,y_w^{-1})$ are the extreme points of the simplex $\Delta_\beta$. 
  
\section{KMS states at the critical inverse temperature}\label{seccritinvt}

We begin with the graphs of Example~\ref{ex2graph1source}. We observe that the hypothesis of rational independence in the two main results of this section is in practice easy to verify using Proposition~A.1 of \cite{aHKR2}: loosely, $\ln d_1$ and $\ln d_2$ are rationally independent unless $d_1$ and $d_2$ are different powers of the same integer. 

\begin{prop}\label{KMS1on2.2}
Suppose that $\Lambda$ is a 2-graph with the skeleton described in Example~\ref{ex2graph1source} and that $r\in (0,\infty)^2$ has $r_i\geq\ln  d_i$ for both $i$,  $r_i=\ln d_i$ for at least one $i$, and $\{r_1,r_2\}$ are rationally independent. Consider the quotient map $q_{\{v\}}:\TC^*(\Lambda)\to \TC^*(\Lambda \backslash\{v\})$ from \cite[Proposition~2.2]{aHKR2}. Then $(\TC^*(\Lambda \backslash\{v\}),\alpha^r)$ has a unique KMS$_1$ state $\phi$, and $\phi\circ q_{\{v\}}$ is the only KMS$_1$ state of $(\TC^*(\Lambda),\alpha^r)$. 
\end{prop}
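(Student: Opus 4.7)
The overall plan is to reduce to the subgraph $u\Lambda u$ by showing that any KMS$_1$ state $\psi$ of $(\TC^*(\Lambda),\alpha^r)$ must satisfy $\psi(t_v)=0$. Once this is in place, the standard argument (Cauchy--Schwarz together with positivity, as in \cite[Lemma~6.2]{AaHR}) forces $\psi$ to vanish on the ideal $I_{\{v\}}$ generated by $t_v$, so $\psi$ factors as $\overline{\psi}\circ q_{\{v\}}$ for some state $\overline{\psi}$ on $\TC^*(\Lambda\setminus\{v\})=\TC^*(u\Lambda u)$. The induced state $\overline{\psi}$ is KMS$_1$ for the restricted dynamics, so the proposition reduces to proving uniqueness of the KMS$_1$ state of $(\TC^*(u\Lambda u),\alpha^r)$.

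For that uniqueness step, I would appeal to the single-vertex case of \cite{aHLRS2}. The graph $u\Lambda u$ is strongly connected on a single vertex with $d_i$ loops of colour $i$, so its vertex matrices reduce to the scalars $d_i$ and $\rho(A_i)=d_i$. The hypotheses $r_i\geq \ln d_i$ with equality for some $i$ place $\beta=1$ exactly at the critical inverse temperature, and the rational independence of $\{r_1,r_2\}$ is the condition that cuts down the simplex of KMS$_1$ states to a single point (cf.\ the remarks in the paragraph preceding the proposition and Proposition~A.1 of \cite{aHKR2}).

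The heart of the argument is therefore the proof that $\psi(t_v)=0$. Choose $i\in\{1,2\}$ with $r_i=\ln d_i$, so $d_ie^{-r_i}=1$. The Toeplitz--Cuntz--Krieger relations give the inequality
\[
\sum_{\mu\in u\Lambda^{ne_i}}t_\mu t_\mu^*\leq t_u
\]
for every $n\geq 1$. Applying $\psi$ together with the KMS$_1$ identity $\psi(t_\mu t_\mu^*)=e^{-r\cdot d(\mu)}\psi(t_{s(\mu)})$ yields
\[
\psi(t_u)\geq |u\Lambda^{ne_i}u|\,e^{-nr_i}\,\psi(t_u)+|u\Lambda^{ne_i}v|\,e^{-nr_i}\,\psi(t_v).
\]
Because $v$ is an absolute source, a direct count in the skeleton of Example~\ref{ex2graph1source} gives $|u\Lambda^{ne_i}u|=d_i^n$ and $|u\Lambda^{ne_i}v|=a_id_i^{n-1}$, and substituting $d_ie^{-r_i}=1$ collapses the inequality to $\psi(t_u)\geq\psi(t_u)+(a_i/d_i)\psi(t_v)$. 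Since $a_i\in\NN\setminus\{0\}$, we conclude $\psi(t_v)=0$.

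The step I expect to require the most care is pinning down (or reworking) the uniqueness statement for the KMS$_1$ state of $\TC^*(u\Lambda u)$ at its critical inverse temperature, since the general single-vertex statement in \cite{aHLRS2} must be checked against our multi-parameter dynamics and the rational-independence hypothesis. Everything else is soft: the Toeplitz inequality, the KMS identity, the path count (which is essentially immediate because $v$ emits no edges), and the factoring through $q_{\{v\}}$ are all standard manoeuvres.
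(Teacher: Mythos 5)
Your proof is correct and shares the paper's overall architecture --- prove $\psi(t_v)=0$, factor through $q_{\{v\}}$ via \cite[Lemma~6.2]{AaHR}, and quote uniqueness of the KMS$_1$ state of the single-vertex graph $u\Lambda u$ --- but you establish the key lemma by a genuinely different route. The paper (Lemma~\ref{KMS1on2.2lem}) applies the state to the product $\prod_{e\in u\Lambda^1}(t_u-t_et_e^*)$ over the unique finite exhaustive set, expands it using Example~\ref{ex2graph1sourceFE}, and observes that the coefficient $(1-d_1e^{-r_1})(1-d_2e^{-r_2})$ of $\phi(t_u)$ vanishes at the critical temperature, leaving $0\leq -a_ie^{-r_i}\phi(t_v)$. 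You instead use only the Toeplitz inequality $\sum_{\mu\in u\Lambda^{ne_i}}t_\mu t_\mu^*\leq t_u$ for the single critical colour $i$; with $d_ie^{-r_i}=1$ and the counts $|u\Lambda^{ne_i}u|=d_i^n$ and $|u\Lambda^{ne_i}v|=a_id_i^{n-1}$ (both correct, since $v$ is an absolute source) this collapses to $\psi(t_u)\geq\psi(t_u)+(a_i/d_i)\psi(t_v)$, which is valid and already works at $n=1$. Your argument is more elementary in that it never touches the exhaustive-set machinery of \S\ref{exhsets}, using only one colour; the paper's computation is the one that exercises the Cuntz--Krieger relations developed there and is reused for the three-vertex graph in Lemma~\ref{KMS1on2.9lem}. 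One correction: for uniqueness of the KMS$_1$ state of $(\TC^*(u\Lambda u),\alpha^r)$ you cite \cite{aHLRS2}, but Theorem~6.1 there covers only $\beta r_i>\ln\rho(A_i)$ and so excludes the critical case; the statement you need (uniqueness at the critical inverse temperature under rational independence of $\{r_1,r_2\}$) is \cite[Proposition~4.2]{aHKR}, which is what the paper invokes. You correctly flagged this as the step needing care, and with that citation repaired your proof is complete.
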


\begin{lem}\label{KMS1on2.2lem}
Suppose that $\Lambda$  and $r$ are as in Proposition~\ref{KMS1on2.2}, and that $\phi$ is a KMS$_1$ state $\phi$ of $(\TC^*(\Lambda),\alpha^r)$. Then $\phi(t_v)=0$.  
\end{lem}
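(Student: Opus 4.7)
The plan is to derive a single subinvariance inequality at the vertex $u$ from the Toeplitz relations together with the KMS$_1$ condition, and then observe that at the critical index it degenerates to force $\phi(t_v)=0$. Rational independence of $\{r_1,r_2\}$, inherited from Proposition~\ref{KMS1on2.2}, is not needed for this intermediate lemma.

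First, I would establish the standard subinvariance at every vertex. For any $w\in\Lambda^0$ and any $i\in\{1,2\}$, the projections $\{t_et_e^*:e\in w\Lambda^{e_i}\}$ are pairwise orthogonal subprojections of $t_w$: orthogonality follows from (T5) and the uniqueness of factorisation, since for distinct edges $e,f$ of the same degree $\Lambda^{\min}(e,f)=\emptyset$, so $(t_et_e^*)(t_ft_f^*)=0$, while $t_et_e^*\le t_w$ follows from $t_e=t_wt_e$ and (T3). Hence
\[
t_w\ge\sum_{e\in w\Lambda^{e_i}}t_et_e^*.
\]
Applying $\phi$ and using the KMS$_1$ condition $\phi(t_et_e^*)=e^{-r_i}\phi(t_{s(e)})$, and writing $m_x:=\phi(t_x)$, I obtain
\[
m_w\ge\sum_{e\in w\Lambda^{e_i}}e^{-r_i}m_{s(e)}.
\]

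Next, I would specialise to $w=u$. Reading off $u\Lambda^{e_i}$ from the skeleton of Example~\ref{ex2graph1source} as $d_i$ loops at $u$ together with $a_i$ edges from $v$ to $u$, this gives
\[
m_u\ge d_ie^{-r_i}m_u+a_ie^{-r_i}m_v\quad\text{for }i=1,2.
\]

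Finally, I would pick the index $i$ for which $r_i=\ln d_i$, which exists by hypothesis. Then $d_ie^{-r_i}=1$, so the displayed inequality simplifies to $0\ge a_ie^{-r_i}m_v$. Since $a_i\in\NN\setminus\{0\}$, $e^{-r_i}>0$ and $m_v=\phi(t_v)\ge 0$ (as $t_v$ is a projection in the unital $C^*$-algebra $\TC^*(\Lambda)$), this forces $m_v=0$.

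There is no real obstacle here: the work is the textbook subinvariance trick, and the only substantive point is that saturating the loop contribution $d_ie^{-r_i}$ to $1$ at the critical index cancels $m_u$ from both sides and squeezes out the contribution of~$v$.
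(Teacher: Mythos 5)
Your proof is correct, but it takes a genuinely different and more elementary route than the paper. The paper applies $\phi$ to the exhaustive-set product $\prod_{e\in u\Lambda^1}(t_u-t_et_e^*)$, expands it by inclusion--exclusion over both colours (via Corollary~\ref{FEex2.2}), and obtains the inequality $0\le (1-d_1e^{-r_1})(1-d_2e^{-r_2})\phi(t_u)-\big(a_1e^{-r_1}+a_2e^{-r_2}-d_1a_2e^{-(r_1+r_2)}\big)\phi(t_v)$, from which $\phi(t_v)=0$ follows after the first factor is killed by the criticality hypothesis. You instead use only the single-colour Toeplitz subinvariance $t_u\ge\sum_{e\in u\Lambda^{e_i}}t_et_e^*$ (the relation the paper calls (T4) in its final section; your derivation of it from (T3), (T5) and $\Lambda^{\min}(e,f)=\emptyset$ for distinct $e,f$ of equal degree is sound), specialised to the critical colour. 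This is shorter and bypasses the exhaustive-set machinery of \S\ref{exhsets} entirely. What it buys the paper to use the heavier product relation becomes visible only in the three-vertex analogue (Lemma~\ref{KMS1on2.9lem}): there, if $r_2=\ln d_2$ while $r_1>\ln d_1$, the critical-colour inequality at $u$ is $m_u\ge d_2e^{-r_2}m_u+a_2e^{-r_2}m_v$, which sees no contribution from $w$ (since $u\Lambda^{e_2}w=\emptyset$) and so cannot force $\phi(t_w)=0$; the two-colour product is needed to couple in the $b_1e^{-r_1}\phi(t_w)$ term. For the present two-vertex lemma, where $v$ emits edges of both colours into $u$, your argument is complete and arguably cleaner.
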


\begin{proof}
We recall from Example~~\ref{ex2graph1source} that the only finite exhaustive subset of $u\Lambda^1$ is $u\Lambda^1$ itself, and from Example~\ref{ex2graph1sourceFE} we then have
\[
\prod_{e\in u\Lambda^1}(t_u-t_et_e^*)=t_u-\sum_{e\in u\Lambda^1}t_et_e^*+\sum_{\mu\in u\Lambda^{e_1+e_2}} t_\mu t_\mu^*. 
\]
Thus positivity of $\phi\big(\prod_{e\in u\Lambda^1}(t_u-t_et_e^*)\big)$ implies that
\[
0\leq \phi(t_u)-\sum_{e\in u\Lambda^{e_1}}\phi(t_et_e^*)-\sum_{e\in u\Lambda^{e_2}}\phi(t_et_e^*)+\sum_{\mu\in u\Lambda^{e_1+e_2}}\phi(t_\mu t_\mu^*).
\]
Now we use the KMS relation and count paths of various degrees to get
\begin{align*}
0&\leq \phi(t_u)-\sum_{e\in u\Lambda^{e_1}}e^{-r_1}\phi(t_{s(e)})-\sum_{e\in u\Lambda^{e_2}}e^{-r_2}\phi(t_{s(e)})+\sum_{\mu\in u\Lambda^{e_1+e_2}}e^{-(r_1+r_2)}\phi(t_{s(\mu)})\\
&=\phi(t_u)-e^{-r_1}\big(d_1\phi(t_u)+a_1\phi(t_v)\big)-e^{-r_2}\big(d_2\phi(t_u)+a_2\phi(t_v)\big)\\
&\hspace{7cm}+e^{-(r_1+r_2)}\big(d_1d_2\phi(t_u)+d_1a_2\phi(t_v)\big)\\
&=\big(1-d_1e^{-r_1}-d_2e^{-r_2}+d_1d_2e^{-(r_1+r_2)}\big)\phi(t_u)\\
&\hspace{5cm}-\big(a_1e^{-r_1}+a_2e^{-r_2}-d_1a_2e^{-(r_1+r_2)}\big)\phi(t_v)\\
&=(1-d_1e^{-r_1})(1-d_2e^{-r_2})\phi(t_u)-\big(a_1e^{-r_1}+a_2e^{-r_2}-d_1a_2e^{-(r_1+r_2)}\big)\phi(t_v)\\
&=-\big(a_1e^{-r_1}+a_2e^{-r_2}-d_1a_2e^{-(r_1+r_2)}\big)\phi(t_v),
\end{align*}
where the coefficient of $\phi(t_u)$ vanished because for at least one of $i=1,2$ we have  $1-d_ie^{-r_i}=1-d_id_i^{-1}=0$ by the hypotheses on $r_i$. If $r_1=\ln d_1$, then we write this as
\[
0\leq-\big(a_1e^{-r_1} +(1-d_1e^{-r_1})a_2e^{-r_2}\big)\phi(t_v)=-a_1e^{-r_1}\phi(t_v),
\]
and positivity of $\phi(t_v)$ implies that $t_v=0$. If $r_2=\ln d_2$, then we use the identity $d_1a_2=d_2a_1$ to rewrite it as 
\[
0\leq -\big(a_2e^{-r_2} +(1-d_2e^{-r_2})a_1e^{-r_1}\big)\phi(t_v)=-a_2e^{-r_2}\phi(t_v),
\]
which also implies that $\phi(t_v)=0$.
\end{proof} 

\begin{proof}[Proof of Proposition~\ref{KMS1on2.2}]
Proposition~4.2 of \cite{aHKR} implies that $(\TC^*(\Lambda \backslash\{v\},\alpha^r)$ has a unique KMS$_1$ state $\phi$. Since $q_{\{v\}}$ interwines the two actions $\alpha^r$, the composition $\phi\circ q_{\{v\}}$ is a KMS$_1$ state of $(\TC^*(\Lambda),\alpha^r)$. On the other hand, if $\psi$ is a KMS$_1$ state of $(\TC^*(\Lambda),\alpha^r)$, then Lemma~\ref{KMS1on2.2lem} implies that $\psi(t_v)=0$. The standard argument using \cite[Lemma~6.2]{AaHR} shows that $\psi$ factors through the quotient by the ideal generated by $t_v$, which is precisely the kernel of $q_{\{v\}}$. Thus there is a KMS$_1$ state $\theta$ of $(\TC^*(\Lambda \backslash\{v\},\alpha^r)$ such that $\psi=\theta\circ q_{\{v\}}$, and uniqueness of the KMS$_1$ state implies that $\theta=\phi$. Hence $\psi= \phi\circ q_{\{v\}}$, as required.
\end{proof}

\begin{thm}\label{KMS1on2.9}
Suppose that $\Lambda$ is a 2-graph with the skeleton described in Example~\ref{ex2graph1sourceFE} and that $r\in (0,\infty)^2$ has $r_i\geq\ln  d_i$ for both $i$,  $r_i=\ln d_i$ for at least one $i$, and $\{r_1,r_2\}$ are rationally independent. Consider the quotient map $q_{\{v,w\}}$ of $\TC^*(\Lambda)$ onto $\TC^*(\Lambda \backslash\{v,w\})$ discussed in \cite[Proposition~2.2]{aHKR2}. Then $(\TC^*(\Lambda \backslash\{v,w\}),\alpha^r)$ has a unique KMS$_1$ state $\phi$, and $\phi\circ q_{\{v,w\}}$ is the only KMS$_1$ state of $(\TC^*(\Lambda),\alpha^r)$. 
\end{thm}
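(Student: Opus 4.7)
The plan is to adapt the strategy of Proposition~\ref{KMS1on2.2} to the three-vertex setting. I will show that any KMS$_1$ state $\psi$ of $(\TC^*(\Lambda),\alpha^r)$ satisfies $\psi(t_v)=\psi(t_w)=0$, and then close with the standard factorisation via \cite[Lemma~6.2]{AaHR} together with the uniqueness of the KMS$_1$ state on $\TC^*(\Lambda\setminus\{v,w\})=\TC^*(u\Lambda u)$ from \cite[Proposition~4.2]{aHKR}, which applies because $\{r_1,r_2\}$ is rationally independent.

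The main tools are the Toeplitz-level positivity of the sub-sum projections $t_u-\sum_{e\in u\Lambda^{e_i}}t_et_e^*$ for $i=1,2$ and $t_v-\sum_{f\in v\Lambda^{e_2}}t_ft_f^*$: each of these is a projection in $\TC^*(\Lambda)$ because, for fixed range and degree, the $t_et_e^*$ are mutually orthogonal sub-projections of the range vertex projection (an immediate consequence of (T1), (T3) and (T5)). Applying $\psi$ to each, using the KMS condition $\psi(t_et_e^*)=e^{-r\cdot d(e)}\psi(t_{s(e)})$, and counting edges from the skeleton in Example~\ref{trickyex}, I will arrive at the three inequalities
\begin{align*}
(1-d_1e^{-r_1})\psi(t_u)&\geq e^{-r_1}\bigl(a_1\psi(t_v)+b_1\psi(t_w)\bigr),\\
(1-d_2e^{-r_2})\psi(t_u)&\geq a_2e^{-r_2}\psi(t_v),\\
\psi(t_v)&\geq b_2e^{-r_2}\psi(t_w).
\end{align*}
The crucial asymmetry is that the second inequality carries no $\psi(t_w)$ term: the only red edges with range $u$ come from $u$ and $v$, never from $w$.

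If $r_1=\ln d_1$, the first inequality collapses to $0\geq a_1\psi(t_v)+b_1\psi(t_w)$, and positivity of $\psi$ immediately forces both $\psi(t_v)$ and $\psi(t_w)$ to vanish. If instead $r_2=\ln d_2$, the second inequality yields only $\psi(t_v)=0$; I then combine this with the third inequality to conclude $\psi(t_w)=0$ as well. I expect this asymmetric case to be the main subtlety, since controlling $\psi(t_w)$ when only $r_2=\ln d_2$ holds requires a two-step argument routed through $v$---reflecting the structural fact that the unique red transition out of $w$ lands in $v$, not in $u$.

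With $\psi(t_v)=\psi(t_w)=0$ established, the standard Cauchy--Schwarz argument (packaged in \cite[Lemma~6.2]{AaHR}) shows that $\psi$ annihilates the ideal generated by $\{t_v,t_w\}$, which is $\ker q_{\{v,w\}}$. Hence $\psi=\theta\circ q_{\{v,w\}}$ for a KMS$_1$ state $\theta$ of $\TC^*(u\Lambda u)$, and uniqueness forces $\theta=\phi$. Conversely, since $q_{\{v,w\}}$ intertwines the two dynamics, $\phi\circ q_{\{v,w\}}$ is automatically a KMS$_1$ state of $(\TC^*(\Lambda),\alpha^r)$, and is therefore the unique such state.
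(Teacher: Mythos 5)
Your proof is correct, but it takes a genuinely different route from the paper's. The paper first proves Lemma~\ref{KMS1on2.9lem} by applying $\phi$ to the full product $\prod_{e\in E}(t_u-t_et_e^*)$ over the minimal exhaustive set $E=(u\Lambda^1u)\cup(u\Lambda^{e_2}v)\cup(u\Lambda^{e_1}w)$ of Proposition~\ref{idminFE}, expanded via Lemma~\ref{CKatu}; the resulting single inequality
\[
0\leq (1-d_1e^{-r_1})(1-d_2e^{-r_2})\phi(t_u)-(1-d_1e^{-r_1})a_2e^{-r_2}\phi(t_v)-b_1e^{-r_1}\phi(t_w)
\]
forces $\phi(t_w)=0$ (but not, in the case $r_1=\ln d_1$, anything about $\phi(t_v)$), after which the paper factors through $q_{\{w\}}$ and invokes Proposition~\ref{KMS1on2.2} for the two-vertex graph to dispose of $t_v$. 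You instead use only the three single-colour Toeplitz subinvariance inequalities, which hold in $\TC^*(\Lambda)$ without any exhaustive-set analysis, and your case split ($r_1=\ln d_1$ directly kills both vertices; $r_2=\ln d_2$ kills $t_v$ and then $t_w$ via the inequality at $v$) is sound — your three inequalities and the edge counts behind them are all correct, and the concluding factorisation argument matches the paper's. What your approach buys is economy: it bypasses the machinery of \S\ref{exhsets} (Proposition~\ref{idminFE} and Lemma~\ref{CKatu}) and does not need to route through the two-vertex Proposition~\ref{KMS1on2.2} at all. What the paper's approach buys is that the exhaustive-set computation is needed anyway elsewhere (e.g.\ to decide which states factor through $C^*(\Lambda)$), so reusing it here keeps the treatment uniform and stress-tests the relations (CK) that are the paper's main technical theme. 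One small caveat: you should make explicit that $a_1,b_1,a_2,b_2>0$ (as the skeleton of Example~\ref{trickyex} assumes), since your deductions from the vanishing of a nonnegative sum rely on the strict positivity of these coefficients.
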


We need an analogue of Lemma~\ref{KMS1on2.2lem} for the present situation. 

\begin{lem}\label{KMS1on2.9lem}
Under the hypotheses of the preceding theorem, suppose that $\phi$ is a KMS$_1$ state of $(\TC^*(\Lambda),\alpha^r)$. Then $\phi(t_w)=0$. 
\end{lem}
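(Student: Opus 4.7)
The plan is to parallel the proof of Lemma~\ref{KMS1on2.2lem}, now using the Cuntz--Krieger-style element at the vertex $u$ produced in Lemma~\ref{CKatu}. In $\TC^*(\Lambda)$ the element $\prod_{e\in E}(t_u-t_et_e^*)$ is a product of commuting projections, hence is a positive element, so $\phi\bigl(\prod_{e\in E}(t_u-t_et_e^*)\bigr)\geq 0$. Expanding via Lemma~\ref{CKatu} and replacing each $\phi(t_et_e^*)$ by $e^{-r\cdot d(e)}\phi(t_{s(e)})$ using the KMS$_1$ condition converts this into a linear inequality in $\phi(t_u)$, $\phi(t_v)$ and $\phi(t_w)$.

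The arithmetic then reduces to path-counting in each of the three subsets of $u\Lambda$ appearing in Lemma~\ref{CKatu}. Reading off the skeleton in Example~\ref{trickyex}: $u\Lambda^{e_1}\{u,w\}$ consists of $d_1$ blue loops at $u$ together with $b_1$ blue edges from $w$; $u\Lambda^{e_2}\{u,v\}$ consists of $d_2$ red loops at $u$ together with $a_2$ red edges from $v$; and via the blue--red factorisation $\mu=\mu_1\mu_2$, $u\Lambda^{e_1+e_2}\{u,v\}$ contains $d_1d_2$ paths with source $u$ and $d_1a_2$ paths with source $v$, using that every red edge leaving $v$ lands at $u$ so $\mu_1$ is always a blue loop at $u$. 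Substituting and grouping by vertex should give
\[
0\leq (1-d_1e^{-r_1})(1-d_2e^{-r_2})\phi(t_u)-a_2e^{-r_2}(1-d_1e^{-r_1})\phi(t_v)-b_1e^{-r_1}\phi(t_w).
\]

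Finally I would split into cases according to which of $r_1=\ln d_1$ or $r_2=\ln d_2$ holds. In either case the coefficient of $\phi(t_u)$ vanishes. If $r_1=\ln d_1$, the coefficient of $\phi(t_v)$ also vanishes, leaving $0\leq -b_1e^{-r_1}\phi(t_w)$; since $b_1>0$ and $\phi(t_w)\geq 0$, this forces $\phi(t_w)=0$. If only $r_2=\ln d_2$ holds, then both remaining coefficients are strictly negative, so the inequality asserts that a sum of two nonpositive terms is nonnegative, forcing each to vanish, and in particular $\phi(t_w)=0$. The main obstacle will be keeping the path counts honest, especially for $u\Lambda^{e_1+e_2}\{u,v\}$ where one has to verify that no factor $\mu_1\mu_2$ has $s(\mu_2)=v$ with $r(\mu_2)\neq u$, i.e.\ that the absolute source $w$ contributes nothing through this term; this is what makes the coefficient of $\phi(t_v)$ come out as a multiple of $1-d_1e^{-r_1}$ and so vanish in the first case.
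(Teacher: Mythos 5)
Your proposal follows the paper's proof essentially verbatim: positivity of $\phi\bigl(\prod_{e\in E}(t_u-t_et_e^*)\bigr)$, expansion via Lemma~\ref{CKatu}, the KMS condition and the same path counts, arriving at the same inequality $0\leq (1-d_1e^{-r_1})(1-d_2e^{-r_2})\phi(t_u)-(1-d_1e^{-r_1})a_2e^{-r_2}\phi(t_v)-b_1e^{-r_1}\phi(t_w)$. The only cosmetic difference is at the end, where the paper avoids your case split by observing directly that $r_1\geq\ln d_1$ makes all three coefficients nonnegative, so both remaining terms must vanish.
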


\begin{proof}
We use an argument like that in the proof of Lemma~\ref{KMS1on2.2lem} for the exhaustive subset $E=(u\Lambda^1u)\cup(u\Lambda^{e_2}v)\cup(u\Lambda^{e_1}w)$ of $u\Lambda^1$ discussed in Example~\ref{trickyex}. Then the state satisfies
\[
\phi\big(\textstyle{\prod_{e\in E}}(t_u-t_et_e^*)\big)\geq 0.
\]
Now using Lemma~\ref{CKatu} to write $\textstyle{\prod_{e\in E}}(t_u-t_et_e^*)$ as a sum gives
\[
\phi\Big(t_u-\sum_{e\in u\Lambda^{e_1}\{u,w\}}t_et_e^*-\sum_{f\in u\Lambda^{e_2}\{u,v\}}t_ft_f^*+\sum_{\mu\in u\Lambda^{e_1+e_2}\{u,v\}}t_\mu t_\mu^*\Big)\geq 0.
\]
Now we use linearity of $\phi$ and the KMS condition to get
\begin{align*}
0&\leq \phi(t_u)-\sum_{e\in u\Lambda^{e_1}\{u,w\}}\phi(t_et_e^*)-\sum_{f\in u\Lambda^{e_2}\{u,v\}}\phi(t_ft_f^*)+\sum_{\mu\in u\Lambda^{e_1+e_2}\{u,v\}}\phi(t_\mu t_\mu^*)\\
&=\phi(t_u)-\big(d_1e^{-r_1}\phi(t_u)+b_1e^{-r_1}\phi(t_w)\big)-\big(d_2e^{-r_2}\phi(t_u)+a_2e^{-r_2}\phi(t_v)\big)\\
&\hspace{5cm}+\big(d_1d_2e^{-(r_1+r_2)}\phi(t_u)+d_1a_2e^{-(r_1+r_2)}\phi(t_v)\big)\\
&=(1-d_1e^{-r_1})(1-d_2e^{-r_2})\phi(t_u)-(1-d_1e^{-r_1})a_2e^{-r_2}\phi(t_v)-b_1e^{-r_1}\phi(t_w).
\end{align*}
Since at least one $r_i$ is $\ln d_i$, we have $(1-d_1e^{-r_1})(1-d_2e^{-r_2})=0$, and we deduce that 
\[
0\leq -(1-d_1e^{-r_1})a_2e^{-r_2}\phi(t_v)-b_1e^{-r_1}\phi(t_w). 
\]
Since $(1-d_1e^{-r_1})a_2e^{-r_2}$, $\phi(t_v)$, and $b_1e^{-r_1}$ are all nonnegative, we must have both $(1-d_1e^{-r_1})a_2e^{-r_2}\phi(t_v)=0$ and $b_1e^{-r_1}\phi(t_w)=0$. In particular, we deduce that $\phi(t_w)=0$.
\end{proof}

\begin{proof}[Proof of Theorem~\ref{KMS1on2.9}]
We suppose that $\psi$ is a KMS$_1$ state of $(\TC^*(\Lambda),\alpha^r)$. Then Lemma~\ref{KMS1on2.9lem}
implies that $\phi(t_w)=0$. The formula in \cite[Proposition~2.1(1)]{aHKR2} implies that $\phi$ vanishes on the ideal $I_{\{w\}}$ generated by $t_w$, and by \cite[Lemma~6.2]{AaHR} $\phi$ factors through a KMS$_1$ state $\theta$ of the system $(\TC^*(\Lambda\backslash\{w\}),\alpha^r)$. The $2$-graph $\Lambda\backslash \{w\}$ is the graph in Proposition~\ref{KMS1on2.2}, and hence that Proposition implies that $\theta =\phi\circ q_{\{v\}}$. The kernel of the composition $q_{\{v\}}\circ q_{\{w\}}$ is the ideal generated by $\{t_v,t_w\}$, and a glance at the definition of the homomorphism in \cite[Proposition~2.2(2)]{aHKR2} shows that   $q_{\{v\}}\circ q_{\{w\}}=q_{\{v,w\}}$. Thus 
\[
\psi=\theta\circ q_{\{w\}}=(\phi\circ q_{\{v\}})\circ q_{\{w\}}=\phi\circ q_{\{v,w\}}.\qedhere
\]
\end{proof}

\section{Where our examples came from}\label{secbigex}

We consider a $2$-graph $\Lambda$ with skeleton
\begin{equation*}\label{4vertsex}
\begin{tikzpicture}[scale=1.5]
 \node[inner sep=0.5pt, circle] (u) at (0,0) {$u$};
    \node[inner sep=0.5pt, circle] (v) at (2,1) {$v$};
    \node[inner sep=0.5pt, circle] (w) at (2,-1) {$w$};
    \node[inner sep=0.5pt, circle] (x) at (4,0) {$x$};
\draw[-latex, blue] (u) edge [out=195, in=270, loop, min distance=30, looseness=2.5] (u);
\draw[-latex, red, dashed] (u) edge [out=165, in=90, loop, min distance=30, looseness=2.5] (u);
\draw[-latex, blue] (v) edge [out=220, in=10] (u);
\draw[-latex, red, dashed] (v) edge [out=190, in=40] (u) ;
\draw[-latex, blue] (w) edge [out=155, in=335] (u) ;
\draw[-latex, red, dashed] (w) edge [out=90, in=270] (v);
\draw[-latex, blue] (u) edge [out=195, in=270, loop, min distance=30, looseness=2.5] (u);
\draw[-latex, blue] (x) edge [out=155, in=335] (v);
\draw[-latex, blue] (x) edge [out=220, in=10] (w);
\draw[-latex, red, dashed] (x) edge [out=190, in=40] (w) ;
\draw[-latex, blue] (x) edge [out=345, in=270, loop, min distance=30, looseness=2.5] (x);
\draw[-latex, red, dashed] (x) edge [out=15, in=90, loop, min distance=30, looseness=2.5] (x);
\node at (-.7, 0.3) {\color{black} $d_2$};
\node at (-.7,-0.3) {\color{black} $d_1$};
\node at (.8, 0.8) {\color{black} $a_2$};
\node at (1.35,0.3) {\color{black} $a_1$};
\node at (2.2, 0) {\color{black} $b_2$};
\node at (.95,-0.7) {\color{black} $b_1$};
\node at (3.3,-0.8) {\color{black} $c_1$};
\node at (2.8,-0.2) {\color{black} $c_2$};
\node at (3.05,0.7) {\color{black} $g_1$};
\node at (4.7, 0.3) {\color{black} $f_2$};
\node at (4.7,-0.3) {\color{black} $f_1$};
\end{tikzpicture}
\end{equation*}
In these graphs there are two nontrivial strongly connected components $\{u\}$ and $\{x\}$, and the bridges $\mu\in u\Lambda x$ all have $|d(\mu)|>1$. The graphs in Examples~\ref{ex2graph1source} and \ref{trickyex} are then the graphs $\Lambda\backslash\{w,x\}$ and $\Lambda\backslash\{x\}$, respectively. We assume that all the integers $d_i,a_i,b_i,c_i,g_i,f_i,$ are nonzero. 

The vertex matrices of the graph $\Lambda$ are then
\begin{equation}
A_1=\begin{pmatrix}d_1&a_1&b_1&0\\0&0&0&g_1\\0&0&0&c_1\\0&0&0&f_1\end{pmatrix}\qquad\text{and}\qquad 
A_2=\begin{pmatrix}d_2&a_2&0&0\\0&0&b_2&0\\0&0&0&c_2\\0&0&0&f_2\end{pmatrix}.
\end{equation}
Thus we have $\rho(A_i)=\max\{d_i,f_i\}$ for $i=1,2$.  As in the last section, we consider a dynamics $\alpha^r:\RR\to \Aut \TC^*(\Lambda)$ given by $r\in (0,\infty)^2$ such that $r_i\geq \ln\rho(A_i)$ for $i=1,2$, and $r_i=\ln\rho(A_i)$ for at least one $i$. We are interested in the KMS$_1$ states of $(\TC^*(\Lambda),\alpha^r)$.

If $\rho(A_i)=d_i$ for some $i$, then the strongly connected component $\{u\}$ is $i$-critical in the sense of \cite[\S3]{FaHR}, and Proposition~3.1 and Corollary~3.2 in \cite{FaHR} imply that all the KMS$_1$ states factor through states of $\Lambda\backslash\{v,w,x\}=u\Lambda u$. Proposition~4.2 of \cite{aHKR2} then implies that $(\TC^*(\Lambda),\alpha^r)$ has a unique KMS$_1$ state.

So we suppose from now on that $\rho(A_i)=f_i>d_i$ for $i=1,2$. We now want to run through the construction of \cite[\S4--5]{FaHR}. The set $H$ in \cite[Proposition~4.1]{FaHR} is empty, so the block decompositions of the matrices $A_i$ look like
\[
A_i=\begin{pmatrix}E_i&B_i\\0&f_i\end{pmatrix}
\]
where $E_i$ is $3\times 3$ and $B_i$ is $3\times 1$. We can choose to work with either $i=1$ or $i=2$, and $i=2$ is marginally simpler because $B_2=\begin{pmatrix}0&0&c_2\end{pmatrix}^T$. The unimodular Perron--Frobenius eigenvector of the matrix $(f_2)$ is the number $1$, and hence the vector $y$ in \cite[Proposition~4.1]{FaHR} is
\begin{align}
y&=\big(\rho(A_{\{u\},2})1-E_2\big)^{-1}B_2\notag\\
&=\begin{pmatrix}f_2-d_2&-a_2&0\\0&f_2&-b_2\\0&0&f_2\end{pmatrix}^{-1}\begin{pmatrix}0\\0\\c_2\end{pmatrix}\notag\\
&=\frac{1}{(f_2-d_2)f_2^2}\begin{pmatrix}
f_2^2&a_2f_2&a_2b_2\\0&(f_2-d_2)f_2&(f_2-d_2)b_2\\0&0&(f_2-d_2)f_2\end{pmatrix}\begin{pmatrix}0\\0\\c_2\end{pmatrix}\notag\\
&=\frac{1}{(f_2-d_2)f_2^2}\begin{pmatrix}
a_2b_2c_2\\(f_2-d_2)b_2c_2\\
(f_2-d_2)f_2c_2\end{pmatrix}=\begin{pmatrix}
(f_2-d_2)^{-1}f_2^{-2}a_2b_2c_2\\f_2^{-2}b_2c_2\\
f_2^{-1}c_2\end{pmatrix}\label{expB_2}.
\end{align}

\begin{rmk}
As we said above, we should have been able to work with $i=1$ and get the same answer (see Equation (4.3) in \cite[Proposition~4.1]{FaHR}). This gives us another opportunity for a reality check. But the answer we got the second time looked quite different, and in sorting out the mess we learned something interesting. The second answer, in the form we first got it, was
\begin{equation}\label{expB_1}
y=\frac{1}{(f_1-d_1)f_1^2}\begin{pmatrix}
a_1f_1g_1+f_1b_1c_1\\(f_1-d_1)f_1g_1\\
(f_1-d_1)f_1g_1\end{pmatrix}=\begin{pmatrix}
(f_1-d_1)^{-1}f_1^{-1}(a_1g_1+b_1c_1)\\f_1^{-1}g_1\\
f_1^{-1}c_1\end{pmatrix}.
\end{equation}
Equality of the third entries in \eqref{expB_2} and \eqref{expB_1} is equivalent to  $f_1c_2=f_2c_1$, which is one of the relations imposed by the requirement that $\Lambda$ is a $2$-graph. Similar reasoning works for the second entries. But when we removed the inverses by cross-multiplying, the top entry in the second calculation became
\begin{equation}\label{badform}
(f_2-d_2)f_2^2(a_1g_1+b_1c_1)=f_2^3a_1g_1+f_2^3b_1c_1-d_2f_2^2a_1g_1-d_2f_2^2b_1c_1.
\end{equation}
In the other calculation, the top entry has just two summands, which are again products of 5 terms. After staring at them for a bit, we realised that these products have meaning: for example, $f_2^3a_1g_1$ is the number $a_1g_1f_2^3$ of paths in $u\Lambda^{2e_1+3e_2}x$ counted using their BBRRR factorisations. Similarly, $d_2f_2^2b_1c_1=d_2b_1c_1f_2^{2}$ counts the same set using the RBBRR factorisations. Now looking at the skeleton confirms that
\[
a_1g_1f_2^3=a_1(g_1f_2)f_2^2=a_1(b_2c_1)f_2^2=(a_1b_2)c_1f_2^2=d_2b_1c_1f_2^2,
\]
and the first and last terms on the right of \eqref{badform} cancel. Similar considerations using the $1$-skeleton match up the remaining terms in the top entries in \eqref{expB_2} and \eqref{expB_1}.
\end{rmk}

We now use the results of \cite[\S5]{FaHR} to describe all the KMS$_1$ states on $(\TC^*(\Lambda),\alpha^r)$ when $\Lambda$ has skeleton described at the start of the section. First we apply \cite[Proposition~5.1]{FaHR} with $z=\big(\begin{smallmatrix}y\\1\end{smallmatrix}\big)$. This gives a KMS$_1$ state $\psi$ of $(\TC^*(\Lambda),\alpha^r)$
such that
\[
\psi(t_\mu t_\nu^*)=\delta_{\mu,\nu}e^{-r\cdot d(\mu)}\|z\|_1^{-1}z_{s(\mu)}.
\]
It factors through a KMS$_1$ state of $(C^*(\Lambda),\alpha^r)$ if and only if $r_i=\ln\rho(A_i)=f_i$ for both $i=1$ and $i=2$.

Now Theorem~5.2 of \cite{FaHR} implies that every KMS$_1$ state of $(\TC^*(\Lambda),\alpha^r)$ is a convex combination of $\psi$ and a state $\phi\circ q_{\{x\}}$ lifted from a KMS$_1$ state of $\TC^*(\Lambda\backslash \{x\})$. Since $\Lambda\backslash\{x\}$ is the graph considered in \S\ref{sectrickyex} and we are assuming that $d_i<f_i=\ln\rho(A_i)$, we are in the range where Theorem~\ref{KMSabovecrit} gives an explicit description of these KMS$_1$ states.

\section{Computing the KMS states on a specific graph}

In \cite[Example~8.4]{aHKR2}, we tested the general results in \cite{aHKR2} about the preferred dynamics by computing all the KMS states of the system $(\TC^*(\Lambda),\alpha^r)$ when $\Lambda$ has the following skeleton: 
\[\begin{tikzpicture}[scale=1.5]
 \node[inner sep=0.5pt, circle] (u) at (0,0) {$u$};
    \node[inner sep=0.5pt, circle] (v) at (2,1) {$v$};
    \node[inner sep=0.5pt, circle] (w) at (2,-1) {$w$};
    \node[inner sep=0.5pt, circle] (x) at (4,0) {$x$};
\draw[-latex, blue] (u) edge [out=195, in=270, loop, min distance=30, looseness=2.5] (u);
\draw[-latex, red, dashed] (u) edge [out=165, in=90, loop, min distance=30, looseness=2.5] (u);
\draw[-latex, blue] (v) edge [out=220, in=10] (u);
\draw[-latex, red, dashed] (v) edge [out=190, in=40] (u) ;
\draw[-latex, blue] (w) edge [out=155, in=335] (u) ;
\draw[-latex, blue] (x) edge [out=155, in=335] (v);
\draw[-latex, blue] (x) edge [out=220, in=10] (w);
\draw[-latex, red, dashed] (x) edge [out=190, in=40] (w);
\draw[-latex, red, dashed] (w) edge [out=90, in=270] (v);
\draw[-latex, blue] (x) edge [out=365, in=270, loop, min distance=30, looseness=2.5] (x);
\draw[-latex, red, dashed] (x) edge [out=15, in=90, loop, min distance=30, looseness=2.5] (x);
\node at (4.75, 0.3) {\color{black} $12$};
\node at (4.7,-0.3) {\color{black} $8$};
\node at (2.8, -0.2) {\color{black} $18$};
\node at (3.35,-0.7) {\color{black} $12$};
\node at (-.7, 0.3) {\color{black} $6$};
\node at (-.7,-0.3) {\color{black} $2$};
\node at (.8, 0.8) {\color{black} $18$};
\node at (1.35,0.3) {\color{black} $6$};
\end{tikzpicture}
\]
Here we do the same for a non-preferred dynamics on the same graph algebra using the stronger versions in \cite{FaHR}. We consider a dynamics $\alpha^r$ in which  \begin{equation}\label{specifyr}
r_1=\ln 8
\quad\text{and}\quad r_2>\ln 12.
\end{equation}
For $\beta>1$, \cite[Theorem~6.1]{aHLRS2} describes a $3$-dimensional simplex of KMS$_\beta$ states of $(\TC^*(\Lambda),\alpha^r)$.

Now we consider the KMS$_1$ states, and aim to apply the results of \cite{FaHR}. The common unimodular Perron--Frobenius eigenvector of $A_{\{x\},i}$ is the $1$-vector $1$, and as in of \cite[Proposition~4.1]{FaHR}, this extends to a common eigenvector $z:=(y,1)$ of the matrices $A_i$ with eigenvalue $\rho(A_{\{x\},1})=\ln 8$ and $\rho(A_{\{x\},2})=\ln 12$. (Since \cite[Proposition~4.1]{FaHR} is linear-algebraic, it applies verbatim here.) Now Proposition~5.1 of \cite{FaHR} gives a KMS$_1$ state $\psi$ of $(\TC^*(\Lambda),\alpha^r)$ with 
\[
\begin{pmatrix}\psi(q_u)\\ \psi(q_v)\\ \psi(q_w)\\ \psi(q_x)\end{pmatrix}
=\|z\|_1^{-1}z=\frac{1}{24}\begin{pmatrix}3\\1\\12\\8\end{pmatrix}.
\]
Since the only critical component of $\Lambda$ for the dynamics $\alpha^r$ is $\{x\}$, Theorem~6.1 of~\cite{FaHR} implies that every KMS$_1$ state of $(\TC^*(\Lambda),\alpha^r)$ is a convex combination of $\psi$ and a state $\phi\circ q_{\{x\}}$ lifted from a KMS$_1$ state $\phi$ of $(\TC^*(\Lambda\backslash\{x\}),\alpha^r)$.

The graph $\Lambda\backslash\{x\}=\Lambda_{\{u,v,w\}}$ is one of those we studied in \S\ref{sectrickyex}. Since
\[
r_1=\ln 8>\rho(A_{\{u,v,w\},1})=\ln 2\quad\text{and}\quad r_2>\ln 12>\rho(A_{\{u,v,w\},2})=\ln 6,
\]
$\beta=1$ is in the range for which Theorem~\ref{KMSabovecrit} gives a concrete description of the KMS$_1$ states of $(\TC^*(\Lambda\backslash\{x\}),\alpha^r)$. So the original system has a $3$-dimensional simplex of KMS$_1$ states with extreme points $\psi$, $\phi_1\circ q_{\{w,x\}}$, $\phi_2\circ q_{\{w,x\}}$ and $\phi_3\circ q_{\{x\}}$.

With the next lemma, we can continue below the inverse temperature $\beta=1$. 
\begin{lem}
Suppose that $\beta<1$ and $\phi$ is a KMS$_\beta$ state of the system $(\TC^*(\Lambda),\alpha^r)$ considered above. Then $\phi$ factors through the quotient map $q_{\{x\}}$.
\end{lem}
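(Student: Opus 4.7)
The plan is to reduce the conclusion to showing $\phi(t_x)=0$. Once this is known, the standard Cauchy--Schwarz argument (cf.\ \cite[Lemma~6.2]{AaHR}) gives that $\phi$ vanishes on the closed two-sided ideal generated by $t_x$, which is precisely the kernel of $q_{\{x\}}$ by \cite[Proposition~2.2]{aHKR2}; hence $\phi$ descends to a state of $\TC^*(\Lambda\setminus\{x\})$, as required.

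To prove $\phi(t_x)=0$, I would use the standard subinvariance relation obtained by combining the Toeplitz inequality $\sum_{e\in y\Lambda^{e_i}}t_et_e^*\leq t_y$ (which comes from (T3) together with the mutual orthogonality built into (T5)) with the KMS$_\beta$ computation $\phi(t_et_e^*)=e^{-\beta r_i}\phi(t_{s(e)})$. Setting $m_y:=\phi(t_y)$, this yields the componentwise inequality $m\geq e^{-\beta r_i}A_im$ for $i=1,2$. The relevant case is $y=x$, $i=1$: reading off the skeleton, the $x$-row of $A_1$ receives only the $f_1=8$ blue loops at $x$ (the blue edges $x\to v$ and $x\to w$ contribute to the $v$- and $w$-rows, not the $x$-row). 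Hence the $x$-component of the subinvariance reduces to
\[
m_x\geq e^{-\beta\ln 8}\cdot 8\,m_x=8^{\,1-\beta}m_x,
\]
and since $\beta<1$ makes $8^{1-\beta}>1$ while $m_x\geq 0$, we conclude $m_x=0$.

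The argument is essentially a one-line subinvariance computation, so I do not expect any real obstacle. The only substantive point is the choice of direction: by hypothesis $r_1=\ln\rho(A_{\{x\},1})$ is the critical value for the component $\{x\}$ in the first direction, so the factor $e^{-\beta r_1}\rho(A_{\{x\},1})=8^{1-\beta}$ strictly exceeds $1$ precisely when $\beta<1$. This is the usual mechanism by which inverse temperatures below the critical value collapse the mass on a critical strongly connected component; the direction $i=2$ is strictly above critical ($r_2>\ln 12$) and would give a subinvariance with factor $12e^{-\beta r_2}<1$ for $\beta$ near $1$, which is useless for our purposes.
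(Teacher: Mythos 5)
Your proposal is correct and follows essentially the same route as the paper: the paper also reduces to showing $\phi(t_x)=0$ via the Toeplitz subinvariance inequality in the blue direction at $x$ (there cited as (T4) with $n=e_1$), obtaining $\phi(t_x)\geq 8^{1-\beta}\phi(t_x)$ and concluding from $8^{1-\beta}>1$ for $\beta<1$, then invoking \cite[Lemma~6.2]{AaHR} to factor through $q_{\{x\}}$. The only cosmetic difference is that you phrase the key step as the $x$-component of the vector inequality $m\geq e^{-\beta r_1}A_1m$ rather than working directly with the sum over $x\Lambda^{e_1}$.
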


\begin{proof}
We aim to prove that $\phi(t_x)=0$. We certainly have $\phi(t_x)\geq 0$. The relation (T4) with $n=e_1$ implies that 
\begin{align*}
\phi(t_x)&\geq\sum_{e\in x\Lambda^{e_1}}\phi(t_et_e^*)=\sum_{e\in x\Lambda^{e_1}}e^{-\beta r_1}\phi(t_e^*t_e)\\
&=e^{-\beta (\ln 8)}|x\Lambda^{e_1}|\phi(t_x)=8^{-\beta}.8\phi(t_x)=8^{1-\beta}\phi(t_x),
\end{align*}
which we can rewrite as $(1-8^{1-\beta})\phi(t_x)\geq 0$. But $\beta<1$ implies that $8^{1-\beta}>1$, and this is only compatible with $\phi(t_x)\geq 0$ if $\phi(t_x)=0$. 
Now it follows from \cite[Lemma~6.2]{AaHR} that $\phi$ factors through $q_{\{x\}}$. 
\end{proof}

So we are interested in the KMS$_\beta$ states of $(\TC^*(\Lambda_{\{u,v,w\}}),\alpha^r)$ for $\beta<1$. Recall from the start of the section that we are assuming that $r_1=\ln 8$ and $r_2>\ln 12$.  The next critical level is
\begin{equation}\label{2ndcrit}
\beta_c:=\max\big\{(\ln 8)^{-1}\ln2,r_2^{-1}\ln 6\big\}=\max\big\{3^{-1}, r_2^{-1}\ln 6\}.
\end{equation}

For $\beta$ satisfying $\beta_c<\beta<1$, we deduce from Theorem~\ref{KMSabovecrit} that the KMS$_\beta$ states of $(\TC^*(\Lambda_{\{u,v,w\}}),\alpha^r)$ form a $2$-dimensional simplex;  Theorem~\ref{KMSabovecrit} also provides explicit formulas for the extreme points. Composing with $q_{\{w\}}$ gives a two-dimensional simplex of KMS$_\beta$ states of $(\TC^*(\Lambda),\alpha^r)$.

\begin{rmk}
Strictly speaking, to apply Theorem~\ref{KMSabovecrit} we need to scale the dynamics to ensure that the critical inverse temperature is $1$ rather than $\beta_c$. Lemma~2.1 of \cite{aHKR} gives the formulas which achieve this. We will assume that this can be done mentally (or at least ``in principle'').
\end{rmk}

For $\beta=\beta_c$, at least one of $r_i\beta\geq \ln\rho(A_{\{u\},i})$ becomes an equality. Provided $\{r_1,r_2\}$ are rationally independent,  Theorem~\ref{KMS1on2.9} implies that $(\TC^*(\Lambda_{\{u,v,w\}}),\alpha^r)$ has a unique KMS$_{\beta_c}$ state which factors through a state of $(\TC^*(\Lambda_{\{u\}}),\alpha^r)$. It follows from \cite[Proposition~6.1]{aHKR} that this state factors through a state of $(C^*(\Lambda_{\{u\}}),\alpha^r)$ if and only if we have $r_i\beta=\ln\rho(A_{\{u\},i})$ for both $i$. For
$\beta<\beta_c$, at least one of the inequalities $r_i\beta\geq \ln\rho(A_i)$ fails, and it follows from \cite[Corollary~4.3]{aHLRS2} that there are no KMS$_\beta$ states on any of these algebras. 

\begin{rmk}
For a dynamics satisfying \eqref{specifyr}, the constraint $r_2>\ln 12$ implies that $r_2^{-1}\ln 6\leq (\ln 12)^{-1}\ln 6$, and which is the bigger in \eqref{2ndcrit} will depend on $r_2$. A calculator tells us that
\[
(\ln 12)^{-1}\ln 6=\frac{\ln 6}{\ln 2+\ln 6}\sim 0.72.
\]
Thus for small $r_2$, we have $3^{-1}<r_2^{-1}\ln 6$,  but for big $r_2$, we can have $3^{-1}>r_2^{-1}\ln 6$. So $\beta_c$ could be either value.
\end{rmk}

\end{document}